\newtheorem{theorem}{Theorem}[section]
\newtheorem{prop}[theorem]{Proposition}
\newtheorem{defn}[theorem]{Definition}
\newtheorem{lemma}[theorem]{Lemma}
\newtheorem{coro}[theorem]{Corollary}
\newtheorem{prop-def}{Proposition-Definition}[section]
\newtheorem{coro-def}{Corollary-Definition}[section]
\newtheorem{remark}[theorem]{Remark}
\newcommand{\nc}{\newcommand}
\nc{\tred}[1]{\textcolor{red}{#1}}
\nc{\tblue}[1]{\textcolor{blue}{#1}}
\nc{\tgreen}[1]{\textcolor{green}{#1}}
\nc{\tpurple}[1]{\textcolor{purple}{#1}}
\nc{\btred}[1]{\textcolor{red}{\bf #1}}
\nc{\btblue}[1]{\textcolor{blue}{\bf #1}}
\nc{\btgreen}[1]{\textcolor{green}{\bf #1}}
\nc{\btpurple}[1]{\textcolor{purple}{\bf #1}}
\newcommand{\efootnote}[1]{}
\renewcommand{\textbf}[1]{}
\newcommand{\delete}[1]{}
\nc{\mlabel}[1]{\label{#1}}  
\nc{\mcite}[1]{\cite{#1}}  
\nc{\mref}[1]{\ref{#1}}  
\nc{\mbibitem}[1]{\bibitem{#1}} 
\nc{\mlabel}[1]{\label{#1}  
{\hfill \hspace{1cm}{\bf{{\ }\hfill(#1)}}}}
\nc{\mcite}[1]{\cite{#1}{{\bf{{\ }(#1)}}}}  
\nc{\mref}[1]{\ref{#1}{{\bf{{\ }(#1)}}}}  
\nc{\mbibitem}[1]{\bibitem[\bf #1]{#1}} 
\nc{\opa}{{\circ_1}} \nc{\opb}{{\circ_2}} \nc{\op}{\bullet}
\nc{\rop}{\ast} \nc{\fopa}{\opa}  \nc{\fopb}{\opb}
\nc{\gop}{\circ}
\nc{\pa}{\frakL}
\nc{\arr}{\rightarrow} \nc{\lu}[1]{(#1)} \nc{\mult}{\mrm{mult}}
\nc{\diff}{\mathfrak{Diff}}
\nc{\opc}{\sharp}\nc{\opd}{\natural}
\nc{\ope}{\circ}
\nc{\opf}{\dashv}
\nc{\opg}{\vdash}
\nc{\oph}{\cdot}
\nc{\bin}[2]{ (_{\stackrel{\scs{#1}}{\scs{#2}}})}  
\nc{\binc}[2]{ \left (\!\! \begin{array}{c} \scs{#1}\\
    \scs{#2} \end{array}\!\! \right )}  
\nc{\bincc}[2]{  \left ( {\scs{#1} \atop
    \vspace{-1cm}\scs{#2}} \right )}  
\nc{\bs}{\bar{S}} \nc{\cosum}{\sqsubset} \nc{\la}{\longrightarrow}
\nc{\rar}{\rightarrow} \nc{\dar}{\downarrow} \nc{\dprod}{**}
\nc{\dap}[1]{\downarrow \rlap{$\scriptstyle{#1}$}}
\nc{\md}{\mathrm{dth}} \nc{\uap}[1]{\uparrow
\rlap{$\scriptstyle{#1}$}} \nc{\defeq}{\stackrel{\rm def}{=}}
\nc{\disp}[1]{\displaystyle{#1}} \nc{\dotcup}{\
\displaystyle{\bigcup^\bullet}\ } \nc{\gzeta}{\bar{\zeta}}
\nc{\hcm}{\ \hat{,}\ } \nc{\hts}{\hat{\otimes}}
\nc{\barot}{{\otimes}} \nc{\free}[1]{\bar{#1}}
\nc{\uni}[1]{\tilde{#1}} \nc{\hcirc}{\hat{\circ}} \nc{\lleft}{[}
\nc{\lright}{]} \nc{\lc}{\lfloor} \nc{\rc}{\rfloor}
\nc{\curlyl}{\left \{ \begin{array}{c} {} \\ {} \end{array}
    \right .  \!\!\!\!\!\!\!}
\nc{\curlyr}{ \!\!\!\!\!\!\!
    \left . \begin{array}{c} {} \\ {} \end{array}
    \right \} }
\nc{\longmid}{\left | \begin{array}{c} {} \\ {} \end{array}
    \right . \!\!\!\!\!\!\!}
\nc{\onetree}{\bullet} \nc{\ora}[1]{\stackrel{#1}{\rar}}
\nc{\ola}[1]{\stackrel{#1}{\la}}
\nc{\ot}{\otimes} \nc{\mot}{{{\boxtimes\,}}}
\nc{\otm}{\overline{\boxtimes}} \nc{\sprod}{\bullet}
\nc{\scs}[1]{\scriptstyle{#1}} \nc{\mrm}[1]{{\rm #1}}
\nc{\margin}[1]{\marginpar{\rm #1}}   
\nc{\dirlim}{\displaystyle{\lim_{\longrightarrow}}\,}
\nc{\invlim}{\displaystyle{\lim_{\longleftarrow}}\,}
\nc{\mvp}{\vspace{0.3cm}} \nc{\tk}{^{(k)}} \nc{\tp}{^\prime}
\nc{\ttp}{^{\prime\prime}} \nc{\svp}{\vspace{2cm}}
\nc{\vp}{\vspace{8cm}} \nc{\proofbegin}{\noindent{\bf Proof: }}
\nc{\proofend}{$\blacksquare$ \vspace{0.3cm}}
\nc{\modg}[1]{\!<\!\!{#1}\!\!>}
\nc{\intg}[1]{F_C(#1)} \nc{\lmodg}{\!
<\!\!} \nc{\rmodg}{\!\!>\!}
\nc{\cpi}{\widehat{\Pi}}
\nc{\sha}{{\mbox{\cyr X}}}  
\nc{\shap}{{\mbox{\cyrs X}}} 
\nc{\shpr}{\diamond}    
\nc{\shp}{\ast} \nc{\shplus}{\shpr^+}
\nc{\shprc}{\shpr_c}    
\nc{\msh}{\ast} \nc{\zprod}{m_0} \nc{\oprod}{m_1}
\nc{\vep}{\varepsilon} \nc{\labs}{\mid\!} \nc{\rabs}{\!\mid}
\nc{\mmbox}[1]{\mbox{\ #1\ }} \nc{\fp}{\mrm{FP}}
\nc{\rchar}{\mrm{char}} \nc{\End}{\mrm{End}} \nc{\Fil}{\mrm{Fil}}
\nc{\Mor}{Mor\xspace} \nc{\gmzvs}{gMZV\xspace}
\nc{\gmzv}{gMZV\xspace} \nc{\mzv}{MZV\xspace}
\nc{\mzvs}{MZVs\xspace} \nc{\Hom}{\mrm{Hom}} \nc{\id}{\mrm{id}}
\nc{\im}{\mrm{im}} \nc{\incl}{\mrm{incl}} \nc{\map}{\mrm{Map}}
\nc{\mchar}{\rm char} \nc{\nz}{\rm NZ} \nc{\supp}{\mathrm Supp}
\nc{\GL}{\mathrm{GL}}
\nc{\Alg}{\mathbf{Alg}} \nc{\Bax}{\mathbf{Bax}} \nc{\bff}{\mathbf f}
\nc{\bfk}{{\bf k}} \nc{\bfone}{{\bf 1}} \nc{\bfx}{\mathbf x}
\nc{\bfy}{\mathbf y}
\nc{\base}[1]{\bfone^{\otimes ({#1}+1)}} 
\nc{\Cat}{\mathbf{Cat}}
\nc{\detail}{\marginpar{\bf More detail}
    \noindent{\bf Need more detail!}
    \svp}
\nc{\Int}{\mathbf{Int}} \nc{\Mon}{\mathbf{Mon}}
\nc{\rbtm}{{shuffle }} \nc{\rbto}{{Rota-Baxter }}
\nc{\remarks}{\noindent{\bf Remarks: }} \nc{\Rings}{\mathbf{Rings}}
\nc{\Sets}{\mathbf{Sets}} \nc{\wtot}{\widetilde{\odot}}
\nc{\wast}{\widetilde{\ast}} \nc{\hodot}[1]{\odot^{#1}}
\nc{\hast}[1]{\ast^{#1}} \nc{\mal}{\mathcal{O}}
\nc{\tet}{\tilde{\ast}} \nc{\teot}{\tilde{\odot}}
\nc{\oex}{\overline{x}} \nc{\oey}{\overline{y}}
\nc{\oez}{\overline{z}} \nc{\oea}{\overline{a}}
\nc{\oeb}{\overline{b}} \nc{\oef}{\overline{f}}
\nc{\weast}[1]{\widetilde{\ast}^{#1}}
\nc{\weodot}[1]{\widetilde{\odot}^{#1}} \nc{\hstar}[1]{\star^{#1}}
\nc{\lae}{\langle} \nc{\rae}{\rangle} \nc{\owd}{\overrightarrow{d}}
\nc{\owc}{\overrightarrow{c}} \nc{\mds}{\mathrm{MDS}}
\nc{\mda}{\mathrm{MDA}} \nc{\mdao}{\mathit{MDA}}
\nc{\CC}{\mathbb C}
\nc{\ZZ}{\mathbb{Z}}
\nc{\cala}{{\mathcal A}} \nc{\calb}{{\mathcal B}}
\nc{\calc}{{\mathcal C}}
\nc{\cald}{{\mathcal D}} \nc{\cale}{{\mathcal E}}
\nc{\calf}{{\mathcal F}} \nc{\calg}{{\mathcal G}}
\nc{\calh}{{\mathcal H}} \nc{\cali}{{\mathcal I}}
\nc{\call}{{\mathcal L}} \nc{\calm}{{\mathcal M}}
\nc{\caln}{{\mathcal N}} \nc{\calo}{{\mathcal O}}
\nc{\calp}{{\mathcal P}} \nc{\calr}{{\mathcal R}}
\nc{\cals}{{\mathcal S}} \nc{\calt}{{\mathcal T}}
\nc{\calw}{{\mathcal W}} \nc{\calk}{{\mathcal K}}
\nc{\calx}{{\mathcal X}} \nc{\CA}{\mathcal{A}}
\nc{\fraka}{{\mathfrak a}} \nc{\frakA}{{\mathfrak A}}
\nc{\frakb}{{\mathfrak b}} \nc{\frakB}{{\mathfrak B}}
\nc{\frakD}{{\mathfrak D}} \nc{\frakg}{{\mathfrak g}}
\nc{\frakH}{{\mathfrak H}} \nc{\frakL}{{\mathfrak L}}
\nc{\frakM}{{\mathfrak M}} \nc{\bfrakM}{\overline{\frakM}}
\nc{\frakm}{{\mathfrak m}} \nc{\frakP}{{\mathfrak P}}
\nc{\frakN}{{\mathfrak N}} \nc{\frakp}{{\mathfrak p}}
\nc{\frakS}{{\mathfrak S}}
\font\cyr=wncyr10 \font\cyrs=wncyr7
\nc{\li}[1]{\textcolor{red}{\tt LG:#1}}
\nc{\yong}[1]{\textcolor{blue}{YZ: #1}}
\nc{\cm}[1]{\textcolor{red}{CB: #1}}
\begin{document}

\title{The category and operad of matching dialgebras}
%

\author{Yong Zhang}
\address{Department of Mathematics, Zhejiang University, Hangzhou, Zhejiang 310027, China}
\email{tangmeng@zju.edu.cn}
\author{Chengming Bai}
\address{Chern Institute of Mathematics and LPMC, Nankai University, Tianjin 300071, China}
         \email{baicm@nankai.edu.cn}
\author{Li Guo}\thanks{Corresponding author: Li Guo; e-mail:liguo@rutgers.edu; phone: (973) 353-3917; fax: (973) 353-5270.}
\address{School of Mathematics and Statistics, Lanzhou University, Lanzhou, Gansu 730000, China and Department of Mathematics and Computer Science,
         Rutgers University,
         Newark, NJ 07102, USA}
\email{liguo@rutgers.edu}

\subjclass[2000]{
16Y99,  
18D50,  
18G60,   
17B99.  
}

\keywords{matching dialgebra, matched pairs, semi-direct sum, pre-Lie algebra, PostLie algebra, rewriting method, Koszul operad, free algebra, Quillen homology}



\begin{abstract}
This paper gives a systematic study of matching dialgebras corresponding to the operad $As^{(2)}$ in~\cite{Zi} as the only Koszul self dual operad there other than the operads of associative algebras and Poisson algebras. The close relationship of matching dialgebras with semi-homomorphisms and matched pairs of associative algebras are established. By anti-symmetrizing, matching dialgerbas are also shown to give compatible Lie algebras, pre-Lie algebras and PostLie algebras. By the rewriting method, the operad of matching dialgebras is shown to be Koszul and the free objects are constructed in terms of tensor algebras. The operadic complex computing the homology of the matching dialgebras is made explicit.

\end{abstract}

\maketitle

\tableofcontents

\setcounter{section}{0}

\section{Introduction}
The Encyclopedia of Types of Algebras~\mcite{Zi} is a collection of algebraic structures (operads) together with their known properties. The operad $As^{(2)}$ is one of the few in the collection that have not been studied systematically. This operad in fact has very good properties. For instance, as well as being Koszul, it is also Koszul self dual, the only such structure in~\mcite{Zi} other than the associative algebra and the Poisson algebra. Our motivation of studying this structure, under the name matching dialgebra, came from its close relationship with the concept of a matched pair~\mcite{Bai} of two associative algebras, the latter being a natural generalization of semi-direct sums of an associative algebra with its bimodules.
In fact, the notion of the matched pair of Lie algebras played an important role in the study of Lie bialgebras~\mcite{Maj} and geometric structures on Lie groups, such as complex product structures~\mcite{AS}. Later, a matched pair of associative algebras played an essential role in the study of double constructions of Frobenius algebras and Connes cocycles, which are equivalent to antisymmetric infinitesimal bialgebras and dendriform D-bialgebras respectively~\mcite{Bai}.

In this paper we show that the matching dialgebra has many other interesting properties. We also find further relations between matching dialgebras with semi-homomorphisms, the above mentioned matched pairs and some important Lie type algebras: compatible Lie algebras in integrable systems and Yang-Baxter equations~\mcite{GS1,GS2,Ku,OS3}, pre-Lie algebras in deformations of associative algebras, affine geometry and quantum field theory~\mcite{Bu,Ge,Vi} and PostLie algebras in operads, integrable systems and classical Yang-Baxter equations~\mcite{BGN,Va1}. We construct the graded-module of the operad of matching dialgebras and the free objects from tensor algebras. We show that its operad is Koszul and construct its homology.

In \S~\mref{ss:def}, we give the definition of a matching dialgebra and study its relationship with semi-homomorphisms. The relationship between matching dialgebras and matched pairs of associative algebras is studied in \S~\mref{ss:matp}. In \S~\mref{sec:matlie}, we derive from matching dialgebras some useful Lie type algebras, such as compatible Lie algebras, pre-Lie algebras and PostLie algebras, in a way similar to the process of deriving Lie algebras from associative algebras by anti-symmetrizing.
We apply the rewriting method to prove that the operad $As^{(2)}$ of matching dialgebras is Koszul in \S~\mref{ss:matkos}, and construct the graded space of the operad $As^{(2)}$ and free matching dialgebras in \S~\mref{sec:free}.
The operadic homology of $As^{(2)}$-algebras is constructed in \S~\mref{ss:mathom}.

\section{Matching dialgebras, matched pairs and Lie type algebras}\mlabel{sec:mat2}

Throughout this paper, algebras and modules are assumed to be over a field $\bfk$. The tensor product $\otimes_{{\bf k}}$ over ${\bf k}$ is simply denoted by $\otimes$.

\subsection{Definitions of matching dialgebras}
\mlabel{ss:def}
The following algebra structure is denoted by $As^{(2)}$-algebra in \mcite{Zi} and got its name from its close relationship with matched pairs of associative algebras. See Section~\mref{ss:matp}.
\begin{defn}
{\rm
\begin{enumerate}
\item
A {\bf matching dialgebra (MDA) }is a $\bfk$-module $A$
 with two binary operations:
$$\opa,\opb:\quad A\otimes A\rightarrow A,$$
satisfying the following {\bf MDA axioms}:
\begin{enumerate}
\item
$\opa$ and $\opb$ are associative; 
\item
the following equations hold
\begin{equation}
(x\opa y)\opb z=x\opa(y\opb z),\quad (x\opb y)\opa z=x\opb(y\opa z),\quad
\forall x,y,z \in A.
\mlabel{eq:mda2}
\end{equation}
\end{enumerate}
\mlabel{it:mda1}
\item
Let $(R,\ope_1,\ope_2)$ and $(R',\ope_1',\ope_2')$ be two
matching dialgebras. A linear map $f:R\rightarrow R'$ is a {\bf
homomorphism of matching dialgebras} if $f$ is a {\bfk}-module homomorphism and for all $a$, $b\in R$:
\begin{equation}
f(a\ope_1b)=f(a)\ope_1'f(b)\quad\text{and}\quad
f(a\ope_2b)=f(a)\ope_2'f(b).
\end{equation}
\end{enumerate}
}
\mlabel{def:mda}
\end{defn}

We give some examples of matching dialgebras from semi-homomorphisms.

\begin{defn}{\rm Let $A$ be a $\bfk$-algebra.
\begin{enumerate}
\item
A linear map $f:A\rightarrow A$
is called a {\bf left semi-homomorphism} (resp. {\bf right semi-homomorphism}) if $f$ satisfies
$$
f(xy)=xf(y)\ (\text{resp. } f(xy)=f(x)y), \quad
\forall x,y\in A.
$$
\item
A linear map $f:A\rightarrow A$
is called a {\bf semi-homomorphism} if $f$ is both a left and a right semi-homomorphism.
\end{enumerate}}
\end{defn}

\begin{remark}
{\rm
\begin{enumerate}
\item Let $A=A_1\oplus A_2$ be a direct sum of ideals. Then the projections $p_1$ and $p_2$ of $A$ to $A_1$ and $A_2$
are semi-homomorphisms. In this sense, semi-homomorphisms are
natural generalizations of projections.
\item A left (resp. right) semi-homomorphism is a left (resp. right) Baxter operator, in the sense that $f$ satisfies
$$f(x)f(y)=f(f(x)y)\ (\text{resp. } f(x)f(y)=f(xf(y))),\quad \forall x,y\in A.$$
\end{enumerate}
}
\end{remark}

\begin{prop}
Let $(A,\cdot)$ be a $\bfk$-algebra.
\begin{enumerate}
\item Let $f$ be a left (resp. right) semi-homomorphism. Then the triple $(A,\opa,\opb)$ with
$$
x\opa y:=x\cdot y,\;\;x\opb y=f(x)\cdot y,\;\;(\text{resp.} x\opb y = x\cdot f(y)),\;\;\forall x,y\in A,
$$
is a matching dialgebra.
\mlabel{it:con1}
\item Let $f,g$ be two semi-homomorphisms. Then the triple $(A,\opa,\opb)$ with
\begin{equation}
x\opa y:=f(x)\cdot y(=f(x\cdot y)=x\cdot f(y)),\;\;
x\opb y:=g(x)\cdot y(=g(x\cdot y)=x\cdot g(y)),\;\;\forall\, x,y\in A,
\mlabel{eq:con}
\end{equation}
is a matching dialgebra.
\mlabel{it:con3}
\end{enumerate}
\mlabel{thm:con}
\end{prop}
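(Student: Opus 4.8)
The plan is to verify, for each of the two constructions, that the pair $(\opa,\opb)$ satisfies the MDA axioms of Definition~\mref{def:mda}, namely associativity of $\opa$ and of $\opb$ together with the two identities in~\eqref{eq:mda2}; in every case this comes down to substituting the definitions and applying the defining relations of semi-homomorphisms. Two easy observations organize the work. First, if $f$ is a left semi-homomorphism then setting $a=f(x)$ in $f(a\cdot b)=a\cdot f(b)$ gives $f(f(x)\cdot y)=f(x)\cdot f(y)$, and dually a right semi-homomorphism satisfies $f(x\cdot f(y))=f(x)\cdot f(y)$ (these are the left/right Baxter identities recorded in the preceding remark). Second, $f$ is a left semi-homomorphism of $(A,\cdot)$ exactly when it is a right semi-homomorphism of the opposite algebra $(A,\cdot^{\mathrm{op}})$, and a direct check shows that reversing the order of both products carries an MDA to an MDA (this interchanges the two identities in~\eqref{eq:mda2}); hence the ``resp.'' clause of part~(a) follows from the first clause applied to $A^{\mathrm{op}}$, and, since swapping $f$ with $g$ swaps $\opa$ with $\opb$, in part~(b) it suffices to verify one identity from each mirror-symmetric pair.

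For part~(a) with $f$ a left semi-homomorphism: $\opa=\cdot$ is associative by hypothesis, and $\opb$ is associative since $(x\opb y)\opb z=f(f(x)\cdot y)\cdot z=f(x)\cdot f(y)\cdot z=x\opb(y\opb z)$ by the first observation and associativity of $\cdot$. The first identity in~\eqref{eq:mda2} holds since $(x\opa y)\opb z=f(x\cdot y)\cdot z=(x\cdot f(y))\cdot z=x\opa(y\opb z)$, the middle step being the defining relation of the left semi-homomorphism $f$, and the second identity holds since $(x\opb y)\opa z=(f(x)\cdot y)\cdot z=f(x)\cdot(y\cdot z)=x\opb(y\opa z)$ by associativity of $\cdot$ alone.

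For part~(b) one first checks the parenthetical equalities in~\eqref{eq:con}: that $f(x)\cdot y$, $f(x\cdot y)$ and $x\cdot f(y)$ coincide (and likewise with $g$) is precisely the statement that $f$ and $g$ are semi-homomorphisms, so the two operations are well defined and have the symmetry used above. Then associativity of $\opa$ follows just as in part~(a) from $f(f(x)\cdot y)=f(x)\cdot f(y)$, associativity of $\opb$ from the analogous identity for $g$, and the mixed axioms from $(x\opa y)\opb z=g(f(x)\cdot y)\cdot z=f(x)\cdot g(y)\cdot z=x\opa(y\opb z)$ and, symmetrically, $(x\opb y)\opa z=f(g(x)\cdot y)\cdot z=g(x)\cdot f(y)\cdot z=x\opb(y\opa z)$, each step invoking the relevant semi-homomorphism relation to move an $f$ or a $g$ past the outer argument.

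Once the two observations above are in place every equality is obtained by a single substitution, so there is no genuine conceptual obstacle; the only point requiring attention is the bookkeeping — keeping straight which of the left- or right-sided semi-homomorphism relation is used at each occurrence, and, when deducing the ``resp.'' and mirror-image cases from the opposite-algebra symmetry, lining up the left/right roles with the interchange of the two identities in~\eqref{eq:mda2}.
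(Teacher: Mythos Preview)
Your proof is correct and follows the same approach as the paper, which simply records that ``the proof is a direct computation from the definitions'' without writing out any of the verifications. Your opposite-algebra reduction for the ``resp.'' clause is a tidy shortcut but not a genuinely different route; it just packages half of the same direct checks.
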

The proof is a direct computation from the definitions.

\subsection{Matching dialgebras and matched pairs of algebras}
\mlabel{ss:matp}
The notion of a matched pair~\mcite{Bai} of associative algebras is obtained by generalizing a semi-direct sum of an associative algebra and its bimodule to a direct sum of the underlying vector spaces of two associative algebras. It provides a natural underlying structure for the so-called  ``double" structures and hence bialgebra theories. In fact, many bialgebraic structures can be interpreted in terms of matched pairs of the corresponding algebras.

\begin{defn}$($\mcite{Bai}$)$
{\rm Let $(A,\cdot)$ and $(B,\ope)$ be two associative algebras.
 Suppose that there are linear maps $l_A,\,r_A:A\arr gl(B)$
and $l_B,r_B:B\arr gl(A)$ such that $(B,l_A,r_A)$ is a bimodule
of $A$ and $(A,l_B,r_B)$ is a bimodule of $B$. If the following equations are also satisfied, then $(A,B,(l_A,r_A),(l_B,r_B))$
  is called a {\bf matched pair} of associative algebras:
\begin{equation}
l_A(x)(a\ope b)=l_A(r_B(a)x)b+(l_A(x)a)\ope b,
\end{equation}
\begin{equation}
r_A(x)(a\ope b)=r_A(l_B(b)x)a+a\ope(r_A(x)b),
\end{equation}
\begin{equation}
l_B(a)(x\cdot y)=l_B(r_A(x)a)y+(l_B(a)x)\cdot y,
\end{equation}
\begin{equation}
r_B(a)(x\cdot y)=r_B(l_A(y)a)x+x\cdot(r_B(a)y),
\end{equation}
\begin{equation}
l_A(l_B(a)x)b+(r_A(x)a)\ope b-r_A(r_B(b)x)a-a\ope(l_A(x)b)=0,
\end{equation}
\begin{equation}
l_B(l_A(x)a)y+(r_B(a)x)\cdot y-r_B(r_A(y)a)x-x\cdot(l_B(a)y)=0,
\end{equation}
for any $x,y\in\,A$, $a,b\in\,B$.
}
\end{defn}

The close relationship between matching dialgebras and matched pairs of associative algebras is given by the following theorem.

\begin{theorem}
Let $A_{\opa}:=(A,\opa)$ and $A_{\opb}:=(A,\opb)$ be two {\bf k}-algebras. Let $L_{\gop_i}$ (resp. $R_{\gop_i}$) denote the left (resp. right) regular representation of $A_{\gop_i}$, $i=1,2$. Then the following conditions are equivalent:
\begin{enumerate}
\item $(A,\opa,\opb)$ is a matching dialgebra.
\item $(A_{\opa},A_{\opb},(L_{\opa},0),(L_{\opb},0))$ is a
matched pair of associative algebras.
\item $(A_{\opa},A_{\opb},(0, R_{\opa}),(0, R_{\opb}))$ is a
matched pair of associative algebras.
\end{enumerate}
\mlabel{thm:mp}
\end{theorem}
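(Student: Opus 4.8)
The plan is to prove the equivalence by directly unwinding the matched pair axioms when the right actions (or in case (c), the left actions) are taken to be zero, and showing that exactly the MDA axioms remain. First I would set up notation: write $\cdot$ for $\opa$ and $\ope$ for $\opb$, so $L_\opa(x)a = x\opa a$ and $L_\opb(a)x = a\opb x$ are the left regular representations, and similarly $R_\opa, R_\opb$ for right multiplications. The key preliminary observation is that $(A_\opb, L_\opa, 0)$ is a bimodule of $A_\opa$ iff $L_\opa$ is an algebra representation of $A_\opa$ on the space $A$ (which holds automatically once $\opa$ is associative) and the zero right action is trivially compatible; symmetrically for $(A_\opa, L_\opb, 0)$ as a bimodule of $A_\opb$. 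So the bimodule conditions in the definition of matched pair reduce precisely to the associativity of $\opa$ and of $\opb$, i.e. to MDA axiom (a).

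Next I would substitute $r_A = 0$, $l_A = L_\opa$ (acting on $B = A_\opb$) and $r_B = 0$, $l_B = L_\opb$ (acting on $A = A_\opa$) into the six displayed matched-pair equations. The third and sixth equations become vacuous or reduce again to associativity, since every term involving an $r$ vanishes; the fifth equation, with $r_A = 0$ and $r_B = 0$, collapses to $l_A(l_B(a)x)b - a\opb(l_A(x)b) = 0$, which reads $(a\opb x)\opa b = a\opb(x\opa b)$ — precisely the second equation in \eqref{eq:mda2}. Symmetrically, the second matched-pair equation (the one with $r_A$ on the left but involving $l_B$) with $r_A=0$ forces a relation of the shape $(x\opa y)\opb z = x\opa(y\opb z)$, the first equation in \eqref{eq:mda2}; and the first and fourth equations, after killing the $r$-terms, either become trivial or reproduce these same two identities. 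Thus (a) $\Leftrightarrow$ (b). The implication (a) $\Leftrightarrow$ (c) is obtained by the mirror-image computation, replacing every left regular representation by the corresponding right regular representation and every $l$ by an $r$; concretely, one can invoke the opposite-algebra symmetry $(A,\opa,\opb) \mapsto (A^{\mathrm{op}},\opa^{\mathrm{op}},\opb^{\mathrm{op}})$, under which the MDA axioms are preserved (since \eqref{eq:mda2} is stable under reversing all three products) and which swaps $L_{\gop_i} \leftrightarrow R_{\gop_i}$, turning a matched pair of type (b) into one of type (c).

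The main obstacle — really just bookkeeping rather than a genuine difficulty — is making sure the six matched-pair equations are matched up correctly with the two MDA identities: one must check that after setting the $r$-actions to zero, none of the six equations imposes anything \emph{stronger} than associativity plus \eqref{eq:mda2} (so that (b) $\Rightarrow$ (a) holds, not just (a) $\Rightarrow$ (b)), and conversely that each of the six is \emph{implied} by the MDA axioms. Since each surviving equation, when rewritten in terms of $\opa$ and $\opb$, is literally one of $(x\opa y)\opb z = x\opa(y\opb z)$, $(x\opb y)\opa z = x\opb(y\opa z)$, or an associativity law for $\opa$ or $\opb$, this verification is a finite, mechanical check. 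I would present it as a short table or a line-by-line substitution rather than belabor each case.
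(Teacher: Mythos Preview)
Your approach is essentially identical to the paper's: substitute $r_A=r_B=0$ into the matched-pair axioms, observe that the surviving equations are precisely the two MDA identities in \eqref{eq:mda2} (plus the bimodule conditions, which amount to associativity of $\opa$ and $\opb$), and then handle (c) by the symmetric computation. One small bookkeeping correction worth making before you write the table: with the $r$'s set to zero it is the second and fourth matched-pair equations (those whose left-hand side is $r_A(x)(a\circ b)$ and $r_B(a)(x\cdot y)$) that collapse to $0=0$, while equations (1), (3), (5), (6) each reduce to one of the two identities in \eqref{eq:mda2}; in particular (5) duplicates (3) and (6) duplicates (1), exactly as the paper records.
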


\begin{proof} (a) $\Longleftrightarrow$ (b).
It is easily checked that $(A_{\opa},A_{\opb},(L_{\opa},0),(L_{\opb},0))$ is
a matched pair of associative algebras if and only if the following equations hold:
\begin{eqnarray}
L_{\opa}(x) (a\opb b)&=&(L_{\opa}(x)a)\opb b,
\mlabel{eq:mpa}
\\
L_{\opb}(a)(x\opa y)&=&(L_{\opb}(a)x)\opa y,
\mlabel{eq:mpb}
\\
L_{\opa}(L_{\opb}(a)x)b&=&a\opb(L_{\opa}(x)b),
\mlabel{eq:mpc}
\\
L_{\opb}(L_{\opa}(x)a)y&=&x\opa(L_{\opb}(a)y),\;\; \forall x,y, a,b\in A.
\mlabel{eq:mpd}
\end{eqnarray}
From Eq.~(\mref{eq:mpa}), we have
$$(x\opa a)\opb b=x\opa(a\opb b).$$
From Eq.~(\mref{eq:mpb}), we have
$$(a\opb x)\opa y=a\opb(x\opa y).$$
Thus, if $(A_{\opa},A_{\opb},(L_{\opa},0),(L_{\opb},0))$ is
a matched pair of associative algebras, then $(A,\opa,\opb)$ is a matching dialgebra. Note that in this case,
Eq.~(\mref{eq:mpc}) is equivalent to Eq.~(\mref{eq:mpb}) and
Eq.~(\mref{eq:mpd}) is equivalent to Eq.~(\mref{eq:mpa}).
Conversely, if $(A,\opa,\opb)$ is a matching dialgebra, then Eq.~(\mref{eq:mpa})--Eq.~(\mref{eq:mpd}) hold. Therefore, $(A_{\opa},A_{\opb},(L_{\opb},0),(L_{\opa},0))$ is
a matched pair of associative algebras.
\smallskip

(a) $\Longleftrightarrow$ (c) follows from a similar argument.
\end{proof}

\begin{prop}(\mcite{Bai})\quad
If $(A,B,(l_A,r_A),(l_B,r_B))$ is a matched pair of associative algebras,
then there
is an associative algebra structure on the direct sum $A\oplus B$ of
the underlying vector spaces of $A$ and $B$. The product is given by
\begin{equation}(x,a)*(y,b)=(x\cdot y+l_B(a)y+r_B(b)x,a\circ b+l_A(x)b+r_A(y)a),\;\;\forall x,y\in A,a,b\in B.\end{equation}
Conversely, every associative algebra whose underlying vector space has a decomposition into the
direct sum of two subalgebras can be
obtained from a matched pair of the two subalgebras.
\mlabel{pp:bai2}
\end{prop}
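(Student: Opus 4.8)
The plan is to verify both assertions by direct computation, using the direct sum decomposition $A\oplus B$ to organize the bookkeeping.

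For the first assertion I would define $*$ on the vector space $A\oplus B$ by the stated formula and check that it is associative. Expanding $\bigl((x,a)*(y,b)\bigr)*(z,c)$ and $(x,a)*\bigl((y,b)*(z,c)\bigr)$ and comparing $A$-components and $B$-components separately, I would sort the terms of the $A$-component into three families: the purely internal terms $(x\cdot y)\cdot z$ versus $x\cdot(y\cdot z)$, which cancel by associativity of $\cdot$; the terms involving two of $a,b,c$ and one of $x,y,z$, matched using the fact that $(A,l_B,r_B)$ is a $B$-bimodule together with the compatibility equation $r_B(a)(x\cdot y)=r_B(l_A(y)a)x+x\cdot(r_B(a)y)$ and the last compatibility equation; and the terms involving two of $x,y,z$ and one of $a,b,c$, matched using $l_B(a)(x\cdot y)=l_B(r_A(x)a)y+(l_B(a)x)\cdot y$ and the remaining compatibility equations. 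The $B$-component is handled symmetrically, with the roles of $(A,\cdot)$ and $(B,\circ)$ and of the two pairs of structure maps interchanged. The point is that the six compatibility equations in the definition of a matched pair are precisely the identities needed for every residual term to cancel, so once the terms are correctly grouped the verification is forced.

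For the converse, let $D$ be an associative algebra with $D=A\oplus B$ as vector spaces, where $A$ and $B$ are subalgebras. For $x\in A$ and $a\in B$ the products $xa$ and $ax$ lie in $D$, and I would define the structure maps by decomposing them along $A\oplus B$: write $xa=r_B(a)x+l_A(x)a$ and $ax=l_B(a)x+r_A(x)a$ with $r_B(a)x, l_B(a)x\in A$ and $l_A(x)a, r_A(x)a\in B$, so that $l_A,r_A:A\to gl(B)$ and $l_B,r_B:B\to gl(A)$. Projecting onto each summand the associativity identities of $D$ that involve one element of one subalgebra and two of the other then yields exactly the bimodule axioms and the six compatibility equations: for instance the $B$-component of $x(yb)=(xy)b$ gives $l_A(xy)=l_A(x)l_A(y)$ while its $A$-component gives the compatibility equation for $r_B$, and the $B$-component of $x(ab)=(xa)b$ gives the compatibility equation for $l_A$. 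Finally one checks that, starting from a matched pair, forming $D=A\oplus B$ with the product $*$ and then reading off the structure maps returns the original data, so the two constructions are mutually inverse.

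The only real difficulty is organizational: the number of terms in the associativity check is large and each leftover term must be matched against the correct compatibility equation. A convenient device is to specialize: setting $b=c=0$ reduces the associativity of $*$ to that of $\cdot$, setting $a=c=0$ isolates a single bimodule identity, and so on, which lets one peel off the compatibility equations one family of terms at a time rather than confronting them all at once.
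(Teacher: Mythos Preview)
Your proposal is correct and is the standard direct verification. Note, however, that the paper does not actually prove this proposition: it is quoted from \cite{Bai} and stated without proof, serving only as input to the corollary that follows. So there is no ``paper's own proof'' to compare against; your write-up supplies exactly the argument one would give, and the specialization trick you mention (setting various entries to zero to isolate one axiom at a time) is the usual way to keep the bookkeeping manageable.
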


From Theorem~\mref{thm:mp} and Proposition~\mref{pp:bai2} we have the following direct consequence.
\begin{coro}
Let $A$ be a vector space with two binary operations $\opa,\opb:A\ot A \rightarrow A$. Define two binary operations $\cdot,\circ: (A\oplus A)\ot (A\oplus A)\rightarrow (A\oplus A)$ by
\begin{equation} (a,b)\cdot (c,d)=(a\opa c+a\opb d, b\opb d+b\opa c),\;\;\forall a,b,c,d\in A,\mlabel{eq:sum1}\end{equation}
\begin{equation} (a,b)\circ (c,d)=(a\opa c+b\opb c, b\opb d+a\opa d),\;\;\forall a,b,c,d\in A.\mlabel{eq:sum2}\end{equation}
respectively. Then the following conditions are equivalent:
\begin{enumerate}
\item $(A,\opa,\opb)$ is a matching dialgebra.
\item $(A\oplus A,\cdot)$ is an associative algebra.
\item $(A\oplus A,\circ)$ is an associative algebra.
\end{enumerate}
\end{coro}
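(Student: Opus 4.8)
The plan is to read off both equivalences directly from Theorem~\mref{thm:mp} combined with Proposition~\mref{pp:bai2}; the only point needing attention is to identify the two ad hoc products of Eq.~(\mref{eq:sum1}) and Eq.~(\mref{eq:sum2}) with the product manufactured by the matched-pair construction of Proposition~\mref{pp:bai2}.

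For (a) $\Leftrightarrow$ (b) I would use condition (c) of Theorem~\mref{thm:mp}. Assuming (a), the operations $\opa,\opb$ are associative, so $A_\opa,A_\opb$ are associative algebras and $(A_\opa,A_\opb,(0,R_\opa),(0,R_\opb))$ is a matched pair; substituting $l_A=l_B=0$, $r_A=R_\opa$, $r_B=R_\opb$ into the formula of Proposition~\mref{pp:bai2} gives the associative product $(x,a)*(y,b)=(x\opa y+x\opb b,\ a\opb b+a\opa y)$ on $A_\opa\oplus A_\opb$, which after relabelling the variable pairs $(x,a),(y,b)$ as $(a,b),(c,d)$ is exactly the product $\cdot$ of Eq.~(\mref{eq:sum1}); hence (b) holds. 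Conversely, assuming (b), a one-line computation from Eq.~(\mref{eq:sum1}) shows $A\oplus 0$ and $0\oplus A$ are subalgebras of $(A\oplus A,\cdot)$ isomorphic to $(A,\opa)$ and $(A,\opb)$ (so $\opa,\opb$ are associative), and $A\oplus A$ is their vector-space direct sum. By the converse half of Proposition~\mref{pp:bai2}, $(A\oplus A,\cdot)$ comes from a matched pair of these two subalgebras; since that construction recovers its structure maps from the components of products of elements of the two pieces, the maps here are pinned down by $(x,0)\cdot(0,d)=(x\opb d,0)$ and $(0,b)\cdot(y,0)=(0,b\opa y)$, forcing the matched pair to be $(A_\opa,A_\opb,(0,R_\opa),(0,R_\opb))$; then condition (c) of Theorem~\mref{thm:mp} gives (a).

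The equivalence (a) $\Leftrightarrow$ (c) is entirely parallel, now using condition (b) of Theorem~\mref{thm:mp}: one checks that Proposition~\mref{pp:bai2} applied to the matched pair $(A_\opa,A_\opb,(L_\opa,0),(L_\opb,0))$, i.e.\ with $l_A=L_\opa$, $l_B=L_\opb$, $r_A=r_B=0$, produces precisely the product $\circ$ of Eq.~(\mref{eq:sum2}), and in the converse direction the cross products $(x,0)\circ(0,d)=(0,x\opa d)$ and $(0,b)\circ(y,0)=(b\opb y,0)$ force the associated matched pair to be that same quadruple. I do not expect a genuine obstacle: all the substance is already in Theorem~\mref{thm:mp} and Proposition~\mref{pp:bai2}, and the only care required is bookkeeping — getting the relabelling right so that the constructed product literally coincides with Eq.~(\mref{eq:sum1}) (resp.\ Eq.~(\mref{eq:sum2})), and noting that in the converse direction Proposition~\mref{pp:bai2} forces the matched pair to be the specific one named in Theorem~\mref{thm:mp}, not merely some matched pair.
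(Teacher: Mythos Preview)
Your proposal is correct and is exactly the approach the paper intends: the paper states the corollary as a ``direct consequence'' of Theorem~\mref{thm:mp} and Proposition~\mref{pp:bai2} with no further proof, and you have faithfully spelled out the bookkeeping --- matching the product of Proposition~\mref{pp:bai2} for the matched pair $(A_\opa,A_\opb,(0,R_\opa),(0,R_\opb))$ with Eq.~(\mref{eq:sum1}) and for $(A_\opa,A_\opb,(L_\opa,0),(L_\opb,0))$ with Eq.~(\mref{eq:sum2}), and in the converse direction reading off the structure maps from the cross products to land on the specific matched pairs named in Theorem~\mref{thm:mp}. Your check of the relabelling and your observation that the subalgebras $A\oplus 0$ and $0\oplus A$ inherit associativity from $(A\oplus A,\cdot)$ are both on target.
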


\subsection{Matching dialgebras and Lie type algebras}
\mlabel{sec:matlie}
We now consider the close relationship between the matching dialgebra and several Lie type algebras that have played important roles in mathematics and mathematical physics as indicated in the introduction. We mainly consider the following binary operations.

\begin{defn}
{\rm Let $(A,\opa,\opb)$ be a matching dialgebra. Define
\begin{equation}
[x,y]_\opa:=x\opa y-y\opa x,\; [x,y]_\opb:=x\opb y-y\opb x,\mlabel{eq:lie}\end{equation}}
\begin{equation}
[x,y]_{\opa,\opb}:=x\opa y-y\opb x,\; [x,y]_{\opb,\opa}:= x\opb
y-y\opa x.\mlabel{eq:plie1}\end{equation}
\end{defn}

\begin{remark} {\rm \begin{enumerate}
\item Since both $(A,\opa)$ and $(A,\opb)$ are associative algebras, $(A, [\;,\;]_\opa)$ and
$(A, [\;,\;]_\opb)$ are Lie algebras.
\item It is obvious that $(A,-[\;,\;]_{\opb,\opa})$ is the opposite algebra of $(A,[\;,\;]_{\opa,\opb})$.
\end{enumerate}}
\mlabel{rk:mdl}
\end{remark}

\begin{defn}
{\rm
\begin{enumerate}
\item (\mcite{Ku,OS1,OS2,OS4})
Two associative algebras $(A,\opa)$ and $(A,\opb)$ are called {\bf compatible} if,
for any $\alpha,\beta\in \bf k$, the product
\begin{equation}
x\star y=\alpha x\opa y+\beta x\opb y, \forall x,y\in A,
\end{equation}
defines an associative algebra.
\item
(\mcite{GS1,GS2,Ku,OS3})
Two Lie algebras $(V,[\ ,\ ]_1)$ and $(V,[\ ,\ ]_2)$ are called {\bf compatible} if,
for any $\alpha,\beta\in \bf k$, the product
\begin{equation}
[x, y]=\alpha [x, y]_1+\beta [x, y]_2,\forall x,y\in V,
\end{equation}
defines a Lie algebra.
\end{enumerate}
}
\end{defn}

\begin{prop}
\begin{enumerate}
\item
Let $(A,\opa,\opb)$ be a matching dialgebra. Then $(A,\opa)$ and $(A,\opb)$ are compatible associative algebras.
\mlabel{it:ascom}
\item
Let $(A,\opa,\opb)$ be a matching dialgebra. Then $(A,[\ ,\ ]_\opa)$ and $(A,[\ , \ ]_\opb)$ defined by Eq.~(\mref{eq:lie}) are compatible Lie algebras.
\mlabel{it:liecom}
\end{enumerate}
\mlabel{pp:com}
\end{prop}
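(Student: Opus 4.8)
The plan is to verify both compatibility statements directly from the MDA axioms, using the fact that a linear combination of associative products is associative precisely when a certain symmetric ``mixed'' bilinear identity holds. For part (\mref{it:ascom}), fix $\alpha,\beta\in\bfk$ and set $x\star y=\alpha\, x\opa y+\beta\, x\opb y$. Expanding $(x\star y)\star z$ and $x\star(y\star z)$ and collecting terms by the monomials $\alpha^2,\beta^2,\alpha\beta$, the $\alpha^2$ terms cancel because $\opa$ is associative, the $\beta^2$ terms cancel because $\opb$ is associative, and the $\alpha\beta$ terms give
\begin{equation*}
(x\opa y)\opb z + (x\opb y)\opa z = x\opa(y\opb z) + x\opb(y\opa z),
\end{equation*}
which is exactly the sum of the two equations in Eq.~(\mref{eq:mda2}). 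Hence $\star$ is associative for all $\alpha,\beta$, so $(A,\opa)$ and $(A,\opb)$ are compatible.

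For part (\mref{it:liecom}), I would first observe that the bracket $[x,y]=\alpha[x,y]_\opa+\beta[x,y]_\opb$ is the anti-symmetrization of the product $\star$ from part (\mref{it:ascom}): indeed $[x,y]=x\star y-y\star x$. Since anti-symmetrizing any associative product yields a Lie algebra (the Jacobi identity follows formally from associativity), and $\star$ is associative by part (\mref{it:ascom}), the bracket $[\,,\,]$ satisfies the Jacobi identity for every $\alpha,\beta\in\bfk$. Together with the evident bilinearity and anti-symmetry, this shows $(A,[\,,\,]_\opa)$ and $(A,[\,,\,]_\opb)$ are compatible Lie algebras. This reduces part (\mref{it:liecom}) to part (\mref{it:ascom}) with essentially no extra computation, which is why I would present part (\mref{it:ascom}) first and then invoke it.

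I do not anticipate a genuine obstacle here; the only point requiring a little care is the bookkeeping in part (\mref{it:ascom}), namely checking that the cross term collects into precisely the \emph{sum} of the two identities in Eq.~(\mref{eq:mda2}) rather than into each identity separately. It is worth noting that compatibility only uses the combined identity $(x\opa y)\opb z+(x\opb y)\opa z=x\opa(y\opb z)+x\opb(y\opa z)$, which is weaker than the two MDA axioms in Eq.~(\mref{eq:mda2}); this is consistent with the fact, remarked earlier, that matching dialgebras form a proper subclass of pairs of compatible associative algebras. For the write-up I would state the $\alpha\beta$-coefficient computation explicitly in a single display and then note the Lie case is immediate by anti-symmetrization of $\star$.
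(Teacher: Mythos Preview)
Your proposal is correct and is precisely the kind of direct computation the paper has in mind; the paper's own proof is the single line ``The proof follows from a direct computation,'' so your expanded argument (including the nice reduction of part~(\mref{it:liecom}) to part~(\mref{it:ascom}) via anti-symmetrizing $\star$) is an appropriate filling-in of the details. One minor remark: the aside that matching dialgebras form a proper subclass of compatible associative pairs is not actually stated earlier in the paper, so you should either drop that parenthetical or phrase it as your own observation rather than a back-reference.
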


The proof follows from a direct computation.

We next study the relationship between matching dialgebras and pre-Lie algebras~\mcite{Bu,Ge,Vi}.
\begin{lemma}
Let $(A,\opa,\opb)$ be a matching dialgebra. With the notations in Eqs.~(\ref{eq:lie}) and (\ref{eq:plie1}), the following equations hold
\begin{eqnarray}
&&[[x,y]_{\opa,\opb},z]_{\opa,\opb}-[x,[y,z]_{\opa,\opb}]_{\opa,\opb}-([[y,x]_{\opa,\opb},z]_{\opa,\opb}
-[y,[x,z]_{\opa,\opb}]_{\opa,\opb})\notag \\
&=& [x,y]_\opb\opa z-z\opb
[x,y]_\opa, \mlabel{eq:plie2}\\
&&
[[x,y]_{\opb,\opa},z]_{\opb,\opa}-[x,[y,z]_{\opb,\opa}]_{\opb,\opa}-([[y,x]_{\opb,\opa},z]_{\opb,\opa}
-[y,[x,z]_{\opb,\opa}]_{\opb,\opa}) \notag \\
&=& [x,y]_\opa\opb z-z\opa
[x,y]_\opb,\forall x,y,z\in A.\mlabel{eq:plie3}
\end{eqnarray}
\mlabel{lemma:plie2}
\end{lemma}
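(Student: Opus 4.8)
The plan is to verify the two identities by direct expansion, using only the MDA axioms (associativity of $\opa$ and $\opb$ together with the two mixed associativity relations in Eq.~(\ref{eq:mda2})), and to note that the second identity follows from the first by the opposite-algebra symmetry recorded in Remark~\ref{rk:mdl}(b). So the real work is Eq.~(\ref{eq:plie2}).

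First I would unfold the left-hand side of Eq.~(\ref{eq:plie2}) entirely in terms of $\opa$ and $\opb$. Writing $[u,v]_{\opa,\opb}=u\opa v-v\opb u$, the bracket $[[x,y]_{\opa,\opb},z]_{\opa,\opb}$ becomes $(x\opa y-y\opb x)\opa z-z\opb(x\opa y-y\opb x)$, and similarly for the other three terms $[x,[y,z]_{\opa,\opb}]_{\opa,\opb}$, $[[y,x]_{\opa,\opb},z]_{\opa,\opb}$, $[y,[x,z]_{\opa,\opb}]_{\opa,\opb}$. This produces a sum of sixteen triple products, each of the shape (one letter)$\,\gop\,$(one letter)$\,\gop\,$(one letter) with a fixed pattern of $\opa$'s and $\opb$'s. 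The next step is to use the MDA axioms to reassociate each term into a canonical form: associativity of $\opa$ lets me treat $a\opa b\opa c$ unambiguously, associativity of $\opb$ likewise, and the mixed relations $(a\opa b)\opb c=a\opa(b\opb c)$ and $(a\opb b)\opa c=a\opb(b\opa c)$ normalize the remaining ``mixed'' triples — note these two relations together mean that any product of three elements in which the operations read $\opa$ then $\opb$ (in the outer-then-inner or left-then-right sense) can be freely reassociated. After normalization, I expect the vast majority of the sixteen terms to cancel in pairs coming from the antisymmetrization ``$[[x,y],z]-[[y,x],z]$'', and what survives should be exactly the four terms assembling into $[x,y]_\opb\opa z-z\opb[x,y]_\opa=(x\opb y-y\opb x)\opa z-z\opb(x\opa y-y\opa x)$ on the right-hand side.

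Once Eq.~(\ref{eq:plie2}) is established, Eq.~(\ref{eq:plie3}) is immediate: by Remark~\ref{rk:mdl}(b), $[u,v]_{\opb,\opa}=-[v,u]_{\opa,\opb}$, so replacing every bracket $[\,\cdot\,,\,\cdot\,]_{\opa,\opb}$ in Eq.~(\ref{eq:plie2}) by the corresponding $[\,\cdot\,,\,\cdot\,]_{\opb,\opa}$ amounts to interchanging the roles of $\opa$ and $\opb$ throughout, and the right-hand side $[x,y]_\opb\opa z-z\opb[x,y]_\opa$ correspondingly becomes $[x,y]_\opa\opb z-z\opa[x,y]_\opb$. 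Alternatively one can simply rerun the same expansion with $\opa$ and $\opb$ swapped, since the MDA axioms are symmetric under this swap (the two relations in Eq.~(\ref{eq:mda2}) are exchanged with each other).

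The main obstacle is purely bookkeeping: keeping the sixteen triple products straight, applying the correct reassociation axiom to each mixed term, and tracking signs through the double antisymmetrization. There is no conceptual difficulty — every step is forced by one of the four MDA axioms — but the computation is error-prone, so I would organize it by grouping the sixteen terms according to their operation-pattern (e.g. all-$\opa$, all-$\opb$, $\opa\opb$-mixed, $\opb\opa$-mixed) and cancelling within each group before comparing with the right-hand side.
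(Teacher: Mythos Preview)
Your proposal is correct and follows essentially the same route as the paper: a direct expansion of the sixteen triple products, simplification via the MDA axioms, and then the observation that Eq.~(\ref{eq:plie3}) follows from the $\opa\leftrightarrow\opb$ symmetry of the axioms. The paper's only organizational difference is that it first simplifies each associator $[[x,y]_{\opa,\opb},z]_{\opa,\opb}-[x,[y,z]_{\opa,\opb}]_{\opa,\opb}$ down to four terms before subtracting the $x\leftrightarrow y$ version, rather than handling all sixteen terms at once.
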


\begin{proof} For $x,y,z\in A$, we have
\begin{eqnarray*}
&&[[x,y]_{\opa,\opb},z]_{\opa,\opb}-[x,[y,z]_{\opa,\opb}]_{\opa,\opb}=x\opa(z\opb
y)
+(y\opa z)\opb x-z\opb(x\opa y)-(y\opb x)\opa z\,,\\
&&[[y,x]_{\opa,\opb},z]_{\opa,\opb}
-[y,[x,z]_{\opa,\opb}]_{\opa,\opb}=(x\opa z)\opb y+y\opa(z\opb
x)-z\opb(y\opa x)-(x\opb y)\opa z\,.
\end{eqnarray*}
This proves Eq.~(\mref{eq:plie2}). Since $\opa$ and $\opb$ are symmetric in the definition of matching dialgebras, Eq.~(\mref{eq:plie3}) also follows.
\end{proof}

\begin{defn}
{\rm (~\mcite{Bu,Ge,Vi}) A {\bf pre-Lie algebra} is a pair $(A,\ope)$ where $A$ is a {\bf k}-vector space and $\ope$ is a binary operation on $A$ that satisfies
\begin{equation} (x\ope y)\ope z-x\ope(y\ope z)= (y\ope x)\ope z-y\ope(x\ope z),
\forall\,x,y,z\in A. \mlabel{eq:prelie}\end{equation}
If in addition, $(A,\ope)$ satisfies
\begin{equation} (x\ope y)\ope z-x\ope(y\ope z)= (x\ope z)\ope y-x\ope(z\ope y),
\forall\,x,y,z\in A,\mlabel{eq:right-symm}\end{equation}
then $(A,\ope)$ is called a {\bf bi-symmetric algebra} or an {\bf assosymmetric algebra}.}
\end{defn}

\begin{prop}
Let $(A,\opa,\opb)$ be a matching dialgebra with both $\opa$ and $\opb$ commutative. With the notations in Eq.~$($\ref{eq:plie1}$)$,
$(A,[\;,\;]_{\opa,\opb})$ is an assosymmetric algebra.
\end{prop}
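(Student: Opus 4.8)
The plan is to deduce both defining identities of an assosymmetric algebra for $(A,[\;,\;]_{\opa,\opb})$ from Lemma~\mref{lemma:plie2} together with the commutativity hypotheses. Write $\ope := [\;,\;]_{\opa,\opb}$ for brevity. First I would note that, since $\opb$ is commutative, $x\ope y = x\opa y - y\opb x = x\opa y - x\opb y$; combined with commutativity of $\opa$ this shows at once that $\ope$ is a \emph{commutative} binary operation, i.e.\ $x\ope y = y\ope x$ for all $x,y\in A$.

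Next, commutativity of $\opa$ and $\opb$ means $[x,y]_\opa = [x,y]_\opb = 0$ for all $x,y\in A$, so the right-hand side of Eq.~(\mref{eq:plie2}) in Lemma~\mref{lemma:plie2} vanishes identically. By the very definition of $\ope$, the left-hand side of Eq.~(\mref{eq:plie2}) equals
\[ \bigl((x\ope y)\ope z - x\ope(y\ope z)\bigr) - \bigl((y\ope x)\ope z - y\ope(x\ope z)\bigr), \]
so Lemma~\mref{lemma:plie2} yields precisely the pre-Lie identity~(\mref{eq:prelie}) for $(A,\ope)$.

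It then remains to verify Eq.~(\mref{eq:right-symm}). Using commutativity of $\ope$, one reduces Eq.~(\mref{eq:right-symm}) to the statement that $(x\ope y)\ope z$ is symmetric under interchanging $y$ and $z$; likewise the already-established Eq.~(\mref{eq:prelie}) rewrites, after applying commutativity of $\ope$, as $(y\ope z)\ope x = (x\ope z)\ope y$ for all $x,y,z$. A short relabelling of variables in this last identity, combined again with commutativity of $\ope$, gives $(x\ope y)\ope z = (z\ope y)\ope x = (y\ope z)\ope x = (x\ope z)\ope y$, which is exactly what is needed. (Alternatively, and even more directly, expand the associator $(x\ope y)\ope z - x\ope(y\ope z)$: the terms involving only $\opa$, resp.\ only $\opb$, cancel by associativity of $\opa$, resp.\ $\opb$, while the mixed terms cancel by the two equations in~(\mref{eq:mda2}); hence this associator is identically zero, so $(A,\ope)$ is in fact associative, a fortiori assosymmetric.)

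All the computations here are routine; the only substantive points are recognizing the left-hand side of Eq.~(\mref{eq:plie2}) as the left-symmetry defect of $\ope$, and then disposing of the second axiom~(\mref{eq:right-symm}) — for the latter the decisive extra ingredient is the commutativity of $\ope$, which is itself immediate from the commutativity of $\opb$.
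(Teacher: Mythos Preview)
Your argument is correct. Note one small slip in your closing sentence: the commutativity of $\ope=[\;,\;]_{\opa,\opb}$ needs the commutativity of \emph{both} $\opa$ and $\opb$, as you in fact use in your opening paragraph; the final summary saying it follows from commutativity of $\opb$ alone is imprecise.

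Your route differs from the paper's. The paper applies Lemma~\mref{lemma:plie2} twice: Eq.~(\mref{eq:plie2}) to see that $(A,[\;,\;]_{\opa,\opb})$ is pre-Lie, and Eq.~(\mref{eq:plie3}) to see that $(A,[\;,\;]_{\opb,\opa})$ is pre-Lie; it then invokes Remark~\mref{rk:mdl} (that $(A,-[\;,\;]_{\opb,\opa})$ is the opposite of $(A,[\;,\;]_{\opa,\opb})$) together with an outside result from~\mcite{Bai1} stating that a pre-Lie algebra is assosymmetric precisely when its opposite is also pre-Lie. You instead use only Eq.~(\mref{eq:plie2}), observe directly that $\ope$ is commutative, and derive Eq.~(\mref{eq:right-symm}) from Eq.~(\mref{eq:prelie}) plus commutativity by a short relabelling, so your proof is entirely self-contained and avoids the citation to~\mcite{Bai1}. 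Your parenthetical alternative in fact proves more than the paper claims: under the commutativity hypotheses the associator of $\ope$ vanishes identically, so $(A,\ope)$ is associative, not merely assosymmetric. That stronger statement is worth recording.
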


\begin{proof} By Lemma~{\mref{lemma:plie2}}, $(A,[\;,\;]_{\opa,\opb})$ and $(A,[\;,\;]_{\opb,\opa})$ are pre-Lie algebras. By \mcite{Bai1}, a pre-Lie algebra is an assosymmetric algebra if and only if
its opposite algebra is also a pre-Lie algebra. Since
$(A,-[\;,\;]_{\opb,\opa})$ is the opposite algebra of
$(A,[\;,\;]_{\opa,\opb})$ by Remark~\mref{rk:mdl}, the conclusion follows.
\end{proof}

Next we consider the case when $\opa$ and $\opb$ might not be commutative.

\begin{defn}{\rm  ~(\mcite{Va1})
A {\bf left PostLie algebra} is a triple $(A,\circ,[\;,\;])$ consisting of a vector space $A$, a product $\circ$
and a skew-symmetric product $[\;,\;]$ satisfying the relations:
\begin{eqnarray}
&[[x,y],z]+[[z,x],y]+[[y,z],x]=0,\mlabel{eq:postlie1}& \\
&(x\circ y)\circ z-x\circ(y\circ z)-(y\circ x)\circ z+y\circ(x\circ z)-[x,y]\circ z =0, \mlabel{eq:postlie2}\\
& z\circ [x,y] -[z\circ x,y]-[x,z\circ y]=0.\mlabel{eq:postlie3}&
\end{eqnarray}
}
\end{defn}

\begin{prop}
Let $(A,\opa,\opb)$ be a matching dialgebra such that
$[x,y]_\opa=[x,y]_\opb$. Then
$(A,[\;,\;]_{\opa,\opb}, [\;,\;]_\opa)$ is a left PostLie algebra.
\mlabel{prop:plie}
\end{prop}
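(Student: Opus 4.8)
The plan is to verify directly the three defining identities \eqref{eq:postlie1}, \eqref{eq:postlie2} and \eqref{eq:postlie3} of a left PostLie algebra for the triple $(A,\circ,[\;,\;])$ with $\circ:=[\;,\;]_{\opa,\opb}$ and $[\;,\;]:=[\;,\;]_\opa$. Skew-symmetry of $[\;,\;]_\opa$ is immediate from \eqref{eq:lie}, so the only real content lies in the three relations, and the hypothesis $[x,y]_\opa=[x,y]_\opb$ will be used in two of them.

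Identity \eqref{eq:postlie1} is nothing but the Jacobi identity for $[\;,\;]_\opa$, which holds because $(A,\opa)$ is an associative algebra, as already noted in Remark~\ref{rk:mdl}. For \eqref{eq:postlie2}, the key observation is that its first four terms
\[
(x\circ y)\circ z-x\circ(y\circ z)-(y\circ x)\circ z+y\circ(x\circ z)
\]
are precisely the left-hand side of \eqref{eq:plie2} in Lemma~\ref{lemma:plie2}, so they equal $[x,y]_\opb\opa z-z\opb[x,y]_\opa$. On the other hand, by the definition of $[\;,\;]_{\opa,\opb}$ we have $[x,y]\circ z=[[x,y]_\opa,z]_{\opa,\opb}=[x,y]_\opa\opa z-z\opb[x,y]_\opa$. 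Under the hypothesis $[x,y]_\opa=[x,y]_\opb$ these two quantities coincide, which is exactly \eqref{eq:postlie2}.

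It remains to prove \eqref{eq:postlie3}, i.e.\ $z\circ[x,y]-[z\circ x,y]-[x,z\circ y]=0$. I would expand each term using $z\circ x=z\opa x-x\opb z$, $z\circ y=z\opa y-y\opb z$ and $[x,y]=x\opa y-y\opa x$, and then use the mixed associativity axioms \eqref{eq:mda2} (in the forms $(u\opa v)\opb w=u\opa(v\opb w)$ and $(u\opb v)\opa w=u\opb(v\opa w)$) to normalize all monomials. A batch of them cancels immediately by associativity of $\opa$; for the surviving ``mixed'' monomials such as $x\opb(z\opa y)$ and $y\opb(z\opa x)$, I would invoke the hypothesis once more in the form $x\opb(z\opa y)-(z\opa y)\opb x=x\opa(z\opa y)-(z\opa y)\opa x$, together with \eqref{eq:mda2} to rewrite $(z\opa y)\opb x=z\opa(y\opb x)$ and $(z\opa y)\opa x=z\opa(y\opa x)$. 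After this substitution the remaining $\opa$-monomials collect into $z\opa[x,y]_\opa$ and the remaining $\opb$-monomials into $-z\opa[x,y]_\opb$, so the whole expression equals $z\opa\big([x,y]_\opa-[x,y]_\opb\big)$, which vanishes by hypothesis.

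The only genuinely delicate point, and the place where some care is needed, is this last regrouping: one must choose to eliminate the mixed monomials by applying the identity $[\;,\;]_\opa=[\;,\;]_\opb$ to arguments like $x$ and $z\opa y$ rather than to $x$ and $y$ directly, and then use \eqref{eq:mda2} to pull the inner operation out. Once that is seen, everything reduces to routine bookkeeping, and no further input beyond the MDA axioms and the assumption $[\;,\;]_\opa=[\;,\;]_\opb$ is required.
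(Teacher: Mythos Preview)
Your proposal is correct and follows essentially the same route as the paper: both arguments dispatch \eqref{eq:postlie1} via the Jacobi identity for $[\;,\;]_\opa$, obtain \eqref{eq:postlie2} directly from Eq.~\eqref{eq:plie2} of Lemma~\ref{lemma:plie2} together with the hypothesis $[x,y]_\opa=[x,y]_\opb$, and verify \eqref{eq:postlie3} by expanding, cancelling with associativity and the mixed associativity \eqref{eq:mda2}, and then applying the hypothesis to the compound arguments $x,\,z\opa y$ and $y,\,z\opa x$ so that everything collapses to $z\opa([x,y]_\opa-[x,y]_\opb)=0$. The paper's computation for \eqref{eq:postlie3} is organized slightly differently---it first reduces the whole expression to the four mixed terms $y\opa(z\opa x)-y\opb(z\opa x)-x\opa(z\opa y)+x\opb(z\opa y)$ before invoking the hypothesis---but this is only a matter of bookkeeping order, not of method.
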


\begin{proof} Eq.~(\mref{eq:postlie1}) follows from the fact that $(A,[\;,\;]_\opa)$ is a Lie algebra. Since $[x,y]_\opa=[x,y]_\opb$, by Eq.~(\ref{eq:plie2}) we have
\begin{eqnarray*}
&&[[x,y]_{\opa,\opb},z]_{\opa,\opb}-[x,[y,z]_{\opa,\opb}]_{\opa,\opb}-([[y,x]_{\opa,\opb},z]_{\opa,\opb}
-[y,[x,z]_{\opa,\opb}]_{\opa,\opb})\\
&=&[[x,y]_\opa,z]_{\opa,\opb}, \quad \forall x,y,z\in A.
\end{eqnarray*}
Thus Eq~(\mref{eq:postlie2}) holds.

Further by definition,
\begin{eqnarray*}
&&
[z,[x,y]_\opa]_{\opa,\opb}-[[z,x]_{\opa,\opb},y]_\opa-[x,[z,y]_{\opa,\opb}]_\opa\\
&=&y\opa (z\opa x)-y\opb (z\opa x)-x\opa (z\opa y)+x\opb (z\opa y)\\
&=&y\opa (z\opa x)-\big((z\opa x)\opb y+y\opa(z\opa x)-(z\opa x)\opa y\big)\\
&&-x\opa (z\opa y)+\big((z\opa y)\opb x+
x\opa (z\opa y)-(z\opa y)\opa x\big)\\
&=&-z\opa (x\opb y)+z\opa (x\opa y)-z\opa (y\opa x)+z\opa (y\opb x)\\
&=&z\opa (-x\opb y+x\opa y-y\opa x+y\opb x)=0.
\end{eqnarray*}
Therefore Eq.~(\mref{eq:postlie3}) holds.
\end{proof}

\section{Koszulity, free objects and operadic homology of the matching dialgebras}
\mlabel{sec:koshom}

We apply the rewriting method to prove that the operad of matching dialgebras is Koszul in Section~\mref{ss:matkos} and to construct free matching dialgebras in Section~\mref{sec:free}.
 We finally give the operadic homology of the matching dialgebras in Section~\mref{ss:mathom}.

\subsection{The operad $As^{(2)}$ of matching dialgebras is Koszul}
\mlabel{ss:matkos}

We recall the following general result~\cite{Ho,LV} on nonsymmetric (ns) operads.

\begin{theorem}
$($Rewriting method for ns operads~\mcite{Ho,LV}$)$. Let $\calp:=\calp(E;R)$ be a reduced quadratic ns operad. If its generating space $E$ admits an ordered basis for which
there exists a suitable order on planar trees such that every critical monomial is
confluent, then the ns operad $\calp$ is Koszul.
\mlabel{thm:lv}
\end{theorem}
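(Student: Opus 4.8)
The statement is a known result, quoted from \mcite{Ho,LV}; the plan is to indicate how its proof runs. The strategy is to reduce Koszulity to the existence of a quadratic Gröbner (equivalently PBW) basis for $\calp$, produced by a confluent rewriting system. First I would \emph{orient} the relations. Since $E$ comes with an ordered basis, the weight-two component $\mathcal{T}(E)^{(2)}$ of the free ns operad on $E$ acquires a distinguished basis of tree monomials, and the chosen suitable order on planar trees makes this a monomial order compatible with operadic composition. Each nonzero element of the relation space $R$ then has a well-defined leading monomial; after Gaussian elimination one may assume $R$ is spanned by rewriting rules $\lambda \mapsto \sum_i c_i\mu_i$ with $\mu_i \prec \lambda$. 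Call a tree monomial \emph{normal} if none of its sub-patterns is a leading monomial $\lambda$ of a rule; the rules let one rewrite any monomial into a $\bfk$-linear combination of normal ones, so in every arity the normal monomials span $\calp$.

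Second, I would establish \emph{confluence}. The only obstruction to a well-defined normal form is a monomial on which two rules can be applied to overlapping sub-patterns and yield different outcomes. For a quadratic presentation these minimal ambiguities are exactly the weight-three \emph{critical monomials}: planar trees with three vertices in which two weight-two leading patterns overlap in a single vertex. The hypothesis says each of these is confluent. One then invokes the operadic Diamond Lemma: because the order is suitable — in particular well-founded and stable under grafting and substitution — the rewriting system terminates, so by Newman's lemma local confluence suffices, and a Bergman-type argument shows that any minimal failure of local confluence would already occur inside a critical monomial. Hence rewriting is globally confluent, normal forms are unique, and the normal monomials form a \emph{basis} of $\calp$ in each arity; equivalently, the leading monomials of the rules generate a quadratic Gröbner basis and $\calp$ has a PBW basis.

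Third, I would conclude by the classical fact (Hoffbeck's PBW criterion, see \mcite{Ho,LV}) that a reduced quadratic ns operad admitting a PBW basis is Koszul. The idea is to compare $\calp$ with its associated monomial operad $\calp^{T}$, presented by the leading monomials of the rules: $\calp^{T}$ is the associated graded of the filtration on $\calp$ induced by the order, a monomial ns operad is automatically Koszul, and Koszulity passes from the associated graded operad to the original one. The basis-of-normal-monomials result gives $\dim \calp(n) = \dim \calp^{T}(n)$, so the Koszul complex of $\calp$ is the associated graded of the acyclic Koszul complex of $\calp^{T}$ with no further cancellation, and acyclicity lifts back to $\calp$.

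The technical heart — and the step I expect to be the main obstacle — is the operadic Diamond Lemma used in the second paragraph: one must set up the combinatorics of planar trees and of overlaps of relation-patterns carefully enough to verify that (i) a suitable order yields a terminating rewriting system compatible with the operadic structure, and (ii) confluence of the finitely many weight-three critical monomials propagates to all higher weights. Both points are precisely what is proved in \mcite{Ho,LV}; granting them, the orientation of the relations and the passage PBW $\Rightarrow$ Koszul are formal, so the theorem follows.
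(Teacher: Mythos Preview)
Your sketch is a faithful outline of the argument in \mcite{Ho,LV}, and there is no gap. Note, however, that the paper does not prove this statement at all: Theorem~\ref{thm:lv} is quoted verbatim from the literature and used as a black box to establish Theorem~\ref{thm:kos}, so there is no ``paper's own proof'' to compare against.
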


{\bf Change of notations:}
To avoid confusion with the conventional notations in an operad, {\it in the rest of the paper}, we will use the notations $\mu_1:=\opa$ and $\mu_2:=\opb$, and reserve $\circ_1:=((\cdot,\cdot),\cdot)$ and $\circ_2:=(\cdot,(\cdot,\cdot))$ for the compositions in the operad $As^{(2)}$.
Thus $f\circ_i g$ is the composition of $g$ on the $i$-th input of $f$, $i=1,2$.

\begin{theorem}
The operad $As^{(2)}$ of matching dialgebras is a Koszul operad.\mlabel{thm:kos}
\end{theorem}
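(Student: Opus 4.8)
The plan is to apply the rewriting method for ns operads (Theorem~\mref{thm:lv}) directly to the quadratic presentation of $As^{(2)}$. First I would record the presentation: the generating space $E=E(2)$ is two-dimensional, spanned by the binary operations $\mu_1$ and $\mu_2$, and $E(n)=0$ for $n\neq 2$, so the operad is reduced and binary quadratic. The space of relations $R\subseteq E\circ E$ (the arity-$3$ part) is spanned by the four elements coming from the MDA axioms: the two associativity relations $\mu_i\circ_1\mu_i-\mu_i\circ_2\mu_i$ for $i=1,2$, and the two mixed relations $\mu_1\circ_1\mu_2-\mu_1\circ_2\mu_2$ and $\mu_2\circ_1\mu_1-\mu_2\circ_2\mu_1$ arising from Eq.~(\mref{eq:mda2}). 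Here I use the convention fixed just before the statement, writing $(f\circ_1 g)$ for grafting $g$ onto the first leaf and $(f\circ_2 g)$ onto the second leaf of a binary tree with two leaves.

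Next I would choose an ordered basis and a monomial order. Take $\mu_1<\mu_2$ on $E$, and on planar binary trees with three leaves use the path-lexicographic (or any standard ``suitable'') order refining it, declaring the left comb $\mu_i\circ_1\mu_j$ to be larger than the right comb $\mu_i\circ_2\mu_j$. With this choice each of the four relations becomes a rewriting rule with leading term a left comb: $\mu_i\circ_1\mu_i\mapsto \mu_i\circ_2\mu_i$ for $i=1,2$, $\mu_1\circ_1\mu_2\mapsto\mu_1\circ_2\mu_2$, and $\mu_2\circ_1\mu_1\mapsto\mu_2\circ_2\mu_1$. So the leading monomials are exactly the left combs $\mu_i\circ_1\mu_j$ for the pairs $(i,j)\in\{(1,1),(2,2),(1,2),(2,1)\}$; note the pair $(i,j)$ with $\mu_i\circ_1\mu_j$ a left comb is a leading term precisely when $\{i,j\}$ is \emph{not} $\{1,2\}$ with the $2$ on the outside — concretely the non-leading left combs are $\mu_1\circ_1\mu_1$? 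I should double-check: the four axioms give leading terms $\mu_1\circ_1\mu_1,\mu_2\circ_1\mu_2,\mu_1\circ_1\mu_2,\mu_2\circ_1\mu_1$, i.e.\ all four left combs, leaving the four right combs as the normal (reduced) monomials spanning $As^{(2)}(3)$, consistent with $\dim As^{(2)}(3)=4$.

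The heart of the argument is then the confluence check. The critical monomials live in arity $4$ and come from overlapping two leading left-comb patterns; the relevant overlap is the left comb of depth three, $(\mu_i\circ_1\mu_j)\circ_1\mu_k$, where both the ``outer'' pattern $\mu_i\circ_1\mu_j$ and the ``inner'' pattern $\mu_j\circ_1\mu_k$ are leading terms. For each admissible triple $(i,j,k)$ I would rewrite this monomial in the two possible ways — first reducing the outer redex, then the inner redex, versus the reverse order — push each chain to a normal form using the four rules, and verify the two normal forms coincide. This is a finite case check (at most the triples $(i,j,k)\in\{1,2\}^3$ for which both $(i,j)$ and $(j,k)$ index leading terms), and the computations are short applications of the same four substitutions; the mixed relations $\mu_1\circ_1\mu_2\mapsto\mu_1\circ_2\mu_2$ and $\mu_2\circ_1\mu_1\mapsto\mu_2\circ_2\mu_1$ interlock exactly so that the ambiguities resolve, which is really the operadic shadow of the associativity-type identities in Eq.~(\mref{eq:mda2}). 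Once every critical monomial is shown confluent, Theorem~\mref{thm:lv} yields that $As^{(2)}$ is Koszul, proving the theorem.

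The main obstacle is purely bookkeeping: getting the monomial order genuinely ``suitable'' in the sense of Theorem~\mref{thm:lv} (compatible with partial compositions) and then being careful and exhaustive in the arity-$4$ confluence diagrams, since a single mishandled triple $(i,j,k)$ would break the argument. There is no conceptual difficulty — no induction, no dimension count beyond the sanity check $\dim As^{(2)}(3)=4$ — so the write-up amounts to displaying the four rewriting rules and the handful of confluent critical-monomial diagrams.
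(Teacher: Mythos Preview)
Your approach is correct and essentially identical to the paper's: both apply Theorem~\mref{thm:lv} to the presentation with all four left combs $\mu_i\circ_1\mu_j$ as leading terms, rewriting to right combs $\mu_i\circ_2\mu_j$, and check confluence of the eight critical monomials $\mu_i\circ_1\mu_j\circ_1\mu_k$. The paper streamlines the confluence check by observing that since every rule is of pure ``associative type'' (the indices $i,j$ are unchanged under rewriting), a single pentagon diagram handles all $(i,j,k)$ at once rather than case-by-case; also, your middle paragraph second-guessing which left combs are leading should simply be deleted, since (as you correctly conclude) all four are.
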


\begin{proof}
The generating space $E$ of binary operations for the operad $As^{(2)}$ has an ordered basis $\{\mu_1>\mu_2\}$ corresponding to the two binary operations in the matching dialgebra.
\smallskip

With the notations just fixed above, the space $R$ of relations of the operad $As^{(2)}$ is spanned by the following set of relators coming from
the matching dialgebra in Definition~\mref{def:mda}:
\begin{equation}
\mu_i\circ_1\mu_j-\mu_i\circ_2\mu_j=0,\quad i, j\in \{1,2\}, \mlabel{eq:re1}
\end{equation}
with the first terms as the leading terms.
These relations define the rewriting rule
$$
\mu_i\circ_1\mu_j \mapsto \mu_i\circ_2\mu_j, , \quad i,j \in\{1,2\},$$
which gives rise to the critical monomials
\begin{equation}
\mu_i\circ_1\mu_j\circ_1\mu_k, \quad i, j, k\in \{1,2\}.
\mlabel{eq:cri}
\end{equation}

By Theorem~\mref{thm:lv}, it is enough to check that these critical monomials are confluent. Since the relations in Eq.~(\mref{eq:re1}) are of associative type, the confluence of the critical monomials can be checked by their diamonds in the pentagon shown in Figure~\ref{fig1}
 in page~\pageref{fig1}.
\begin{figure}[h] \centering

  \includegraphics*[141,275][431,582]{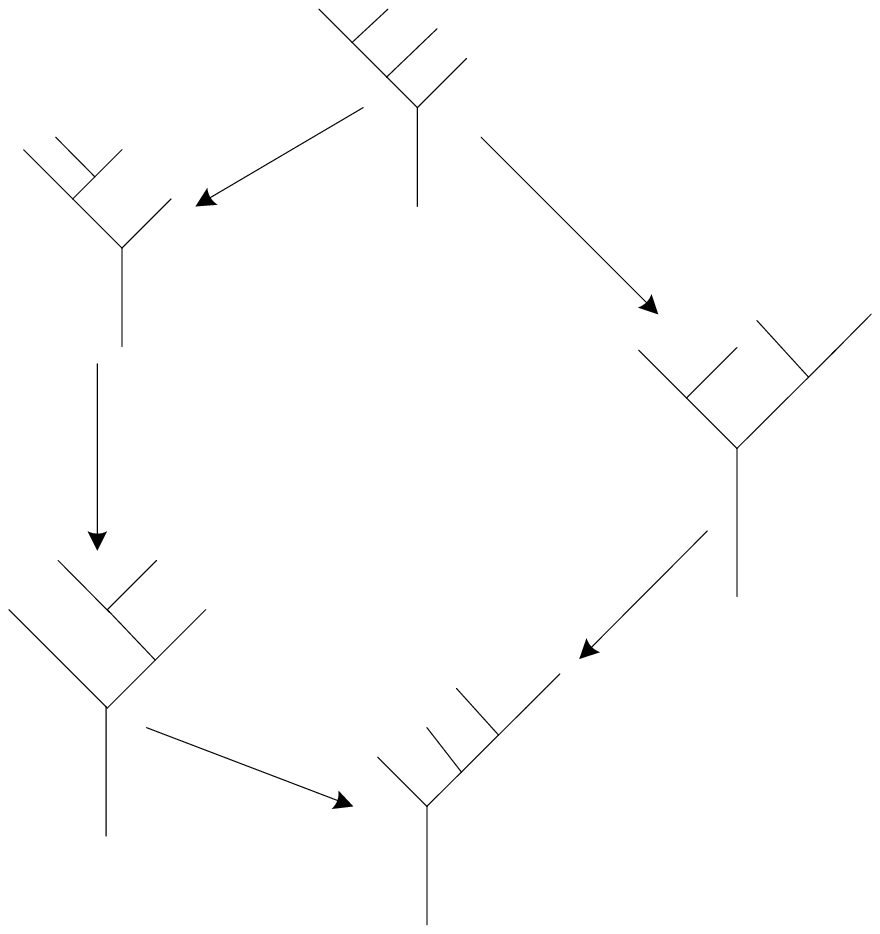}

Figure~\label{fig1}\ref{fig1}
\end{figure}
More precisely, for the critical monomial $\mu_i\circ_1\mu_j\circ_1\mu_k$, the left side of the pentagon is
$$\mu_i \circ_1 (\mu_j\circ_1\mu_k)
=\mu_i \circ_1 (\mu_j\circ_2\mu_k)
=(\mu_i \circ_1\mu_j)\circ_2 \mu_k
=(\mu_i \circ_2 \mu_j)\circ_2\mu_k
=\mu_i \circ_2(\mu_j\circ_1 \mu_k)
=\mu_i \circ_2 (\mu_j\circ_2\mu_k),
$$
and the right side of the pentagon is
$$(\mu_i\circ_1 \mu_j)\circ_1 \mu_k
= (\mu_i\circ_2 \mu_j)\circ_1\mu_k
=\mu_i\circ_2 (\mu_j\circ_2\mu_k).$$
Hence the critical monomial is confluent, as needed.
\end{proof}

\subsection{Free matching dialgebras}
\mlabel{sec:free}

The concept of a free matching dialgebra is defined by the usual universal property coming from the forgetful functor from the category of matching dialgebras to the category of sets. We first give in Section~\mref{ss:frees} a construction of the free matching dialgebra on a vector space by the rewriting method in Section~\mref{ss:matkos} which also gives the graded vector space of the operad of matching dialgebras. We then give in Section~\mref{ss:freea} constructions of the free matching dialgebras from tensor algebras.

\subsubsection{Free matching matching dialgebras}
\mlabel{ss:frees}

\begin{theorem}
Let $V$ be a vector space with a basis $X$ and let $\{\op_1,\op_2\}$ be a set. Define the disjoint union
\begin{eqnarray*}
\mds(X):&=& X\coprod \left(\coprod_{k\geq 1} X\times (\{\op_1,\op_2\}
 \times X)^k\right) \notag\\
 &=& \left\{\, u:=x_1\op_{i_1} x_2\op_{i_2} \cdots \op_{i_{k-1}} x_k\ |\ x_i\in X, 1\leq i\leq k,
 i_j\in \{1,2\}, 1\leq j\leq k-1, k\geq 1\right\}
\mlabel{eq:fmds}
 \end{eqnarray*}
 with the convention that $u=x_1$ when $k=1$.
Let
$$\mda(V):= \bfk \mds(X)$$
be the $\bfk$-linear span.
For $u=x_1\op_{i_1} \cdots \op_{i_{k-1}} x_k$
 and $v=x_{k+1}\op_{i_{k+1}} \cdots \op_{i_{k+\ell-1}} x_{k+\ell}$ in $\mds(X)$, define
 \begin{eqnarray}
 \mu_1(u, v):&=& x_1 \op_{i_1}\cdots\op_{i_{k-1}} x_k \op_{1} x_{k+1}\op_{i_{k+1}} \cdots \op_{i_{k+\ell-1}} x_{k+\ell}, \mlabel{eq:deop1}\\
 \mu_2(u,v):&=& x_1 \op_{i_1}\cdots\op_{i_{k-1}} x_k \op_{2} x_{k+1}\op_{i_{k+1}} \cdots \op_{i_{k+\ell-1}} x_{k+\ell}. \mlabel{eq:deop2}
 \end{eqnarray}
Extend $\mu_1$ and $\mu_2$ to $\mda(V)$ by bilinearity.
Then the triple $(\mda(V),\mu_1,\mu_2)$, together with the natural embedding
 $$j_V:V\to \mda(V), \quad v\mapsto v, \quad v\in V,$$
is the free matching dialgebra on $V$. More precisely, the following universal property is satisfied: Let $(R,\mu_1',\mu_2')$ be a matching dialgebra with
operations $\mu_1'$ and $\mu_2'$. Let $f:V\longrightarrow R$ be a linear map. Then there exists a unique homomorphism
$$\free{f}:\mda(V)\rightarrow R$$ of matching dialgebras such that
$f=\free{f} \circ j_V$.
 \mlabel{thm:fmds}
\end{theorem}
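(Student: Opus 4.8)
The plan is to verify that $(\mda(V),\mu_1,\mu_2)$ is a matching dialgebra and then establish the universal property by the usual two-part argument (existence and uniqueness of the extension). For the algebra axioms, I would first observe that a monomial $u = x_1\op_{i_1}\cdots\op_{i_{k-1}}x_k$ of $\mds(X)$ is completely determined by the word $x_1\cdots x_k$ in $X$ together with the interior sequence of labels $i_1,\dots,i_{k-1}\in\{1,2\}$. The operations $\mu_1,\mu_2$ simply concatenate two such monomials, inserting a new label ($1$ for $\mu_1$, $2$ for $\mu_2$) between $x_k$ and $x_{k+1}$. From this description, associativity of each $\mu_s$ is immediate: both $\mu_s(\mu_s(u,v),w)$ and $\mu_s(u,\mu_s(v,w))$ produce the concatenated word with the two joining labels equal to $s$ and all original interior labels preserved. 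Likewise, the axioms in Eq.~(\mref{eq:mda2}) hold because $\mu_s(\mu_t(u,v),w)$ and $\mu_t(u,\mu_s(v,w))$ each yield the concatenation of the three words with the first joining label $t$ and the second joining label $s$; the two expressions coincide regardless of whether $s=t$. Extending by bilinearity, $(\mda(V),\mu_1,\mu_2)$ is a matching dialgebra.

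Next I would construct the extension $\free{f}$. Given a linear map $f\colon V\to R$, define $\free{f}$ on the basis $\mds(X)$ recursively on the length $k$ of the monomial: set $\free{f}(x) := f(x)$ for $x\in X$, and for $u = x_1\op_{i_1}\cdots\op_{i_{k-1}}x_k$ with $k\ge 2$, write $u = \mu_{i_1}(x_1, u')$ where $u' = x_2\op_{i_2}\cdots\op_{i_{k-1}}x_k$ has length $k-1$, and set $\free{f}(u) := \mu_{i_1}'(f(x_1),\free{f}(u'))$. Then extend to $\mda(V)$ by linearity. By construction $\free{f}\circ j_V = f$. To see $\free{f}$ is a homomorphism, I must check $\free{f}(\mu_s(u,v)) = \mu_s'(\free{f}(u),\free{f}(v))$ for all $u,v\in\mds(X)$ and $s\in\{1,2\}$; by bilinearity this suffices. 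This is proved by induction on the length $k$ of $u$. For $k=1$, $u=x_1$ and $\mu_s(u,v)$ has first joining label $s$, so $\free{f}(\mu_s(x_1,v)) = \mu_s'(f(x_1),\free{f}(v))$ directly from the recursive definition. For $k\ge 2$, write $u = \mu_{i_1}(x_1,u')$; associativity-type axioms give $\mu_s(u,v) = \mu_s(\mu_{i_1}(x_1,u'),v) = \mu_{i_1}(x_1,\mu_s(u',v))$ (this is exactly one of the MDA axioms when $s\ne i_1$, and associativity of $\mu_s$ when $s=i_1$), and then the recursive definition together with the inductive hypothesis applied to $u'$ (length $k-1$) yields $\free{f}(\mu_s(u,v)) = \mu_{i_1}'(f(x_1),\mu_s'(\free{f}(u'),\free{f}(v))) = \mu_s'(\mu_{i_1}'(f(x_1),\free{f}(u')),\free{f}(v)) = \mu_s'(\free{f}(u),\free{f}(v))$, using the same identities in $R$.

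Finally, uniqueness: any homomorphism $g\colon\mda(V)\to R$ with $g\circ j_V = f$ must satisfy $g(x) = f(x)$ for $x\in X$ and, since $u = \mu_{i_1}(x_1,u')$, must satisfy $g(u) = \mu_{i_1}'(g(x_1),g(u'))$; by induction on length this forces $g = \free{f}$ on $\mds(X)$ and hence on all of $\mda(V)$ by linearity. I expect no serious obstacle here—the construction is a standard normal-form argument—but the one point that needs genuine care is the inductive step for the homomorphism property, specifically justifying the rewriting $\mu_s(\mu_{i_1}(x_1,u'),v) = \mu_{i_1}(x_1,\mu_s(u',v))$ in $\mda(V)$: one must correctly split into the cases $s = i_1$ (associativity of $\mu_s$) and $s\ne i_1$ (an MDA axiom of Eq.~(\mref{eq:mda2})), and then mirror exactly the same splitting in $R$, which is legitimate precisely because $R$ satisfies the same axioms. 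As a side remark, since $\mda(V)$ is the free matching dialgebra, the component of arity $n$ of $\mds(X)$ with $|X|=1$ — namely the set of interior label-sequences $(i_1,\dots,i_{n-1})\in\{1,2\}^{n-1}$ — has cardinality $2^{n-1}$, which therefore computes the dimension of the arity-$n$ part of the operad $As^{(2)}$.
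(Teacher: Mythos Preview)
Your proof is correct, but it follows a genuinely different route from the paper's. The paper does not verify the MDA axioms and the universal property directly; instead it invokes the confluence of the rewriting system established in the proof of Theorem~\mref{thm:kos} to conclude (via \cite[Section 8.1]{LV}) that the operad $As^{(2)}$ admits a PBW basis consisting of the right combs with vertices decorated by $\mu_1,\mu_2$. This immediately gives a basis $\{(\mu_{i_1},\dots,\mu_{i_{n-1}})\mid i_j\in\{1,2\}\}$ of $As^{(2)}_n$, and the description of the free algebra $As^{(2)}(V)=\bigoplus_{n\geq 1} As^{(2)}_n\otimes V^{\otimes n}$ together with its multiplications then drops out of the operadic formalism. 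Your argument, by contrast, is entirely elementary and self-contained: concatenation for the axioms, and an induction on word length for existence, the homomorphism property, and uniqueness of $\free{f}$. The paper's approach buys a direct link between Koszulity and the free-object description (both flow from the same confluence computation) and places the result inside the general PBW machinery; your approach buys independence from Theorem~\mref{thm:kos} and from the operadic background in~\mcite{LV}, and recovers $\dim As^{(2)}_n=2^{n-1}$ as a corollary rather than an input. Your handling of the one delicate point---the identity $\mu_s(\mu_{i_1}(x_1,u'),v)=\mu_{i_1}(x_1,\mu_s(u',v))$ and its mirror in $R$---is correct in both the $s=i_1$ and $s\neq i_1$ cases.
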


\begin{proof}
The rewriting system of the ns operad $As^{(2)}$ is shown to be convergent in the proof of Theorem~\mref{thm:kos}. Thus, as in \cite[Section 8.1]{LV}, the operad admits a PBW basis consisting of the right combs shown in Figure~\ref{fig} in page~\pageref{fig} with vertices decorated by $\mu_1$ and $\mu_2$.
\begin{figure}[h] \centering

  \includegraphics*[223,393][334,518]{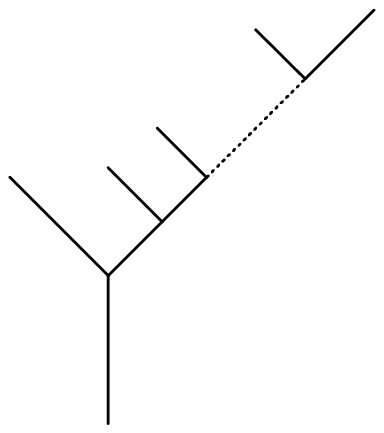}

Figure~\label{fig}\ref{fig}
\end{figure}
Thus $As^{(2)}_n$ has a $\bfk$-basis $\{(\mu_{i_1},\cdots,\mu_{i_{n-1}})\,|\, i_j\in \{1,2\}, 1\leq j\leq n-1\}$.
Since for a vector space $V$, the free $As^{(2)}$-algebra on $V$ is  $As^{(2)}(V)=\bigoplus_{n\geq 1} As^{(2)}_n \ot V^{\ot n}$, we see that the free matching dialgebra on $V$ has a basis given in the theorem.
The given multiplications can then be derived from the compositions of the operations of the operad $As^{(2)}$.
\end{proof}

\subsubsection{Free matching dialgebras in terms of tensor algebras}
\mlabel{ss:freea}
We now give two more constructions of free matching dialgebras from tensor algebras.

For a vector space $W$, let $\overline{T}_{\rop}(W)=\bigoplus_{k\geq 1}
W^{\ot k}$ denote the non-unitary tensor algebra where the (tensor) product is denoted by $\ot_\rop$.

\begin{theorem}
Let $V$ be a vector space over $\bfk$.
\begin{enumerate}
\item
Equip $\overline{T}_{\rop_1}(\overline{T}_{\rop_2}(V))$ with products $\mu_1$ and $\mu_2$
as follows. First define $\mu_1$ to be $\ot_{\rop_1}$. Next for
$u=u_1\ot_{\rop_1} \cdots \ot_{\rop_1}u_m$
and $v=v_1\ot_{\rop_1} \cdots \ot_{\rop_1}v_n$
 in
$\overline{T}_{\rop_1}(\overline{T}_{\rop_2}( V))$ with $u_i, v_j\in \overline{T}_{\rop_2}(V)$ for
$1\leq i\leq m$, $1\leq j\leq n,$ define
\begin{equation}
\mu_2(u,v):=u_1\ot_{\rop_1} \cdots \ot_{\rop_1}(u_m\ot_{\rop_2}v_1)\ot_{\rop_1} \cdots
\ot_{\rop_1} v_n.
\mlabel{eq:prod2a}
\end{equation}
Then $(\overline{T}_{\rop_1}(\overline{T}_{\rop_2}(V)),\mu_1,\mu_2)$ is the free matching dialgebra on
$V$. \mlabel{it:tma}
\item
Similarly, $\overline{T}_{\rop_1}(\overline{T}_{\rop_2}(V))$ with  products $\mu_2$ and
$\mu_2$ defined as follows is the free matching dialgebra on $V$. First, define
$\mu_2$ to be $\ot_{\rop_2}$. Next for $u=u_1\ot_{\rop_2} \cdots \ot_{\rop_2}u_m$ and
$v=v_1\ot_{\rop_2} \cdots \ot_{\rop_2}v_n$ in $\overline{T}_{\rop_1}(\overline{T}_{\rop_2}(V))$ with
$u_i, v_j\in \overline{T}_{\rop_1}(V)$ for $1\leq i\leq m$, $1\leq j\leq n,$
define
\begin{equation}
\mu_1(u,v):=u_1\ot_{\rop_2} \cdots \ot_{\rop_2}(u_m\ot_{\rop_1}v_1)\ot_{\rop_2} \cdots
\ot_{\rop_2}v_n.
\mlabel{eq:prod2b}
\end{equation}
\mlabel{it:tmb}
\end{enumerate}
\mlabel{thm:fmds2}
\end{theorem}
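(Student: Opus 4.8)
The plan is to verify the two claimed constructions by checking that each proposed structure $(\overline{T}_{\rop_1}(\overline{T}_{\rop_2}(V)),\mu_1,\mu_2)$ is a matching dialgebra and then matching it against the model free matching dialgebra $(\mda(V),\mu_1,\mu_2)$ built in Theorem~\mref{thm:fmds}. For part (a), I would first observe that $\overline{T}_{\rop_1}(\overline{T}_{\rop_2}(V))$ has an obvious $\bfk$-basis: a basis element is an $\rop_1$-word whose letters are $\rop_2$-words in a chosen basis $X$ of $V$, i.e. it has the form $w_1\ot_{\rop_1}\cdots\ot_{\rop_1} w_m$ where each $w_i=x_{i,1}\ot_{\rop_2}\cdots\ot_{\rop_2}x_{i,n_i}$ with $x_{i,j}\in X$. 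Such an element corresponds bijectively to the word $u\in\mds(X)$ obtained by writing out all the letters $x_{i,j}$ left to right, inserting $\op_1$ between consecutive $\rop_1$-blocks and $\op_2$ between consecutive letters inside a block. This gives a $\bfk$-linear isomorphism $\Phi:\mda(V)\to\overline{T}_{\rop_1}(\overline{T}_{\rop_2}(V))$ which restricts to the identity $V\to V$ on the generators. The crux is then to check that $\Phi$ intertwines the two operations $\mu_1,\mu_2$ defined on the respective spaces: that $\Phi(\mu_i(u,v))=\mu_i(\Phi(u),\Phi(v))$ for $i=1,2$. Since $\mu_1$ on $\mda(V)$ concatenates with an $\op_1$ in the middle, and $\mu_1$ on $\overline{T}_{\rop_1}(\overline{T}_{\rop_2}(V))$ is by definition $\ot_{\rop_1}$ (concatenation of $\rop_1$-words, which puts a new block boundary between the last block of $u$ and the first block of $v$), these match. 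Similarly $\mu_2$ on $\mda(V)$ concatenates with an $\op_2$ in the middle, and $\mu_2$ on the tensor side, by Eq.~(\mref{eq:prod2a}), fuses the last block $u_m$ of $u$ with the first block $v_1$ of $v$ via $\ot_{\rop_2}$ — which is exactly putting an $\op_2$ (not an $\op_1$) between the corresponding letters. So the operations correspond under $\Phi$, and hence $(\overline{T}_{\rop_1}(\overline{T}_{\rop_2}(V)),\mu_1,\mu_2)$ is a matching dialgebra isomorphic to the free one, carrying the embedding $j_V$, and therefore is itself free on $V$ by the universal property.

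Alternatively, and perhaps more cleanly, one can avoid explicit reference to the word model and argue directly. First check the MDA axioms on $\overline{T}_{\rop_1}(\overline{T}_{\rop_2}(V))$: associativity of $\mu_1=\ot_{\rop_1}$ is automatic; associativity of $\mu_2$ follows from associativity of $\ot_{\rop_2}$ within a block combined with associativity of $\ot_{\rop_1}$; and the two mixed relations in Eq.~(\mref{eq:mda2}) — namely $(u\,\mu_1\,v)\,\mu_2\,w = u\,\mu_1\,(v\,\mu_2\,w)$ and $(u\,\mu_2\,v)\,\mu_1\,w = u\,\mu_2\,(v\,\mu_1\,w)$ — are verified by a short bookkeeping computation: in the first, a $\mu_2$ applied after a $\mu_1$ fuses only the first block of the right factor with the last block of $v$, which is unaffected by whether $u$ sits in front or gets prepended afterwards; symmetrically for the second. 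Then define the MDA-homomorphism $F:\mda(V)\to\overline{T}_{\rop_1}(\overline{T}_{\rop_2}(V))$ as the unique one extending $j_V$ using the universal property of $\mda(V)$ from Theorem~\mref{thm:fmds}, and exhibit an inverse: either construct a linear map back that is visibly two-sided inverse on the spanning words, or note that by a dimension/graded count in each arity $n$ both spaces have the same dimension (the $n$-th graded piece has dimension $2^{n-1}|X|^n$ since a basis word on $n$ letters is determined by the $n-1$ choices of separator $\op_1$ or $\op_2$), so the surjective-by-construction map $F$ is an isomorphism. I expect the main obstacle to be purely notational: carefully tracking which block boundaries are $\rop_1$-type versus $\rop_2$-type when iterating $\mu_2$, since Eq.~(\mref{eq:prod2a}) only fuses the single interface block $u_m\ot_{\rop_2}v_1$ and leaves all other boundaries alone — the potential for indexing errors is real but there is no conceptual difficulty.

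Part (b) is entirely symmetric: it is obtained from (a) by interchanging the roles of $\rop_1$ and $\rop_2$, equivalently of $\mu_1$ and $\mu_2$. Since the MDA axioms in Definition~\mref{def:mda} are symmetric under simultaneously swapping $\opa\leftrightarrow\opb$ (as already noted in the proof of Lemma~\mref{lemma:plie2} and in the proof of Theorem~\mref{thm:mp}), the statement and proof of (b) follow from those of (a) verbatim after this relabeling, so I would simply remark that (b) follows by the symmetry $\mu_1\leftrightarrow\mu_2$, $\rop_1\leftrightarrow\rop_2$ in Definition~\mref{def:mda} and the construction in Theorem~\mref{thm:fmds}. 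This keeps the write-up short while being complete.
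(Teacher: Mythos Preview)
Your proposal is correct and follows essentially the same approach as the paper: the paper also proves part~(\mref{it:tma}) by exhibiting an explicit bijection between $\overline{T}_{\rop_1}(\overline{T}_{\rop_2}(V))$ and $\mda(V)$ via the block decomposition you describe, and leaves part~(\mref{it:tmb}) implicit by symmetry. If anything you are more careful than the paper, which writes down the mutually inverse linear maps $\varphi,\psi$ but does not spell out the verification that they intertwine $\mu_1$ and $\mu_2$; your explicit check of this (and your alternative direct verification of the MDA axioms) fills that in.
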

\begin{proof}
(\mref{it:tma}). We only need to show that $ \overline{T}_{\rop_1}(\overline{T}_{\rop_2}(V))\cong
\mda(V)$ as matching dialgebras, where $\mda(V)$ is the free
matching dialgebra defined in Theorem~\mref{thm:fmds}.

Construct homomorphisms $\varphi:\overline{T}_{\rop_1}(\overline{T}_{\rop_2}(V))\to
\mda(V)$ and $\psi:\mda(V)\to \overline{T}_{\rop_1}(\overline{T}_{\rop_2}(V))$ as follows.
Let \begin{equation} u=u_1\ot_{\rop_1} \cdots \ot_{\rop_1} u_m\in
\overline{T}_{\rop_1}(\overline{T}_{\rop_2}(V)) \,\,\text{with} \,\,u_i=u_{i1}\ot_{\rop_2}\cdots
\ot_{\rop_2} u_{i n_i}\in \overline{T}_{\rop_2}(V), 1\leq i\leq
m.\mlabel{eq:freemda1}
\end{equation}
 Define
\begin{eqnarray*}
\varphi(u): &=& (u_{11}\op_2
\cdots \op_2 u_{1n_1})\op_1\cdots \op_1(u_{m 1}\op_2 \cdots
\op_2 u_{mn_m}).
\end{eqnarray*}

Conversely, suppose $v=v_1\op_{i_1} \cdots \op_{i_{k-1}}v_k$ is in
$\mda(V)$ with $i_j\in \{1,2\}, 1\leq j\leq k-1$,
then there are $1=p_1<\cdots <p_\ell\leq k$ and $k_j\geq 0, 1\leq
j\leq \ell$ such that:
\begin{equation}
v=\overline{v}_1 \op_1 \cdots \op_1 \overline{v}_\ell,\mlabel{eq:freemda2}
\end{equation}
where
$\overline{v}_j=v_{p_j}\op_2 \cdots \op_2 v_{p_j+k_j}$.
So in particular $p_{j+1}=p_j+k_j+1$, $1\leq j\leq \ell-1$.
Then define
$$\psi(v):=\overline{u}_1\ot_{\rop_1} \cdots \ot_{\rop_1} \overline{u}_\ell,$$
where
$\overline{u}_j=x_{p_j}\ot_{\rop_2} \cdots \ot_{\rop_2} x_{p_j+k_j},\,1\leq
j\leq \ell.$

We next check that $\varphi$ and $\psi$ are inverse of each other.
For $u$ in $\overline{T}_{\rop_1}(\overline{T}_{\rop_2}(V))$ defined in
Eq.~(\mref{eq:freemda1}) and $v$ in $\mda(V)$ defined in Eq.~(\mref{eq:freemda2}), we have

\begin{eqnarray*}
(\psi\ope \varphi)(u) &=& \psi(u_{11}\ot_{\fopb} \cdots \ot_{\fopb}
u_{1n_1}\ot_{\fopa} \cdots
\ot_{\fopa} u_{m1}\ot_{\fopb} \cdots \ot_{\fopb} u_{mn_m})\\
&=& (u_{11} \op_2\cdots \op_2 u_{1n_1})\op_1 \cdots \op_1 (u_{m1}\op_2\cdots \op_2 u_{mn_m})\\
&=& u.
\end{eqnarray*}
Similarly
$\varphi\ope \psi=\rm id_{\mda(V)}$.
\end{proof}

\subsection{Operadic homology of matching dialgebras}
\mlabel{ss:mathom}

In Theorem~\mref{thm:kos}, we proved that the operad $As^{(2)}$
of matching dialgebras
 is Koszul. On the other hand, we know~\mcite{LV} that, for a Koszul operad, the Quillen homology is equal to the homology of the bar construction
defined in~\mcite{LV} of the algebra over an operad. So the Quillen
homology of the matching dialgebra is the desired operadic homology and, in order to give the operadic
homology of matching dialgebra,
we only need to give the
construction of the Koszul dual cooperad ${{As^{(2)}}^{\scriptstyle \textrm{!`}}}$.

\begin{prop}
The operad $As^{(2)}$ is a Koszul self dual operad. More generally, for $k\geq 2$, let $As^{(k)}$ denote the binary quadratic ns operad whose space of binary operations has a basis $\mu_i, 1\leq i\leq k$ and whose space $R$ of relations is spanned by
$$ \mu_i\circ _1 \mu_j - \mu_i\circ_2\mu_j, \quad 1\leq i, j\leq k.$$
Here we use the notation $\circ_1:=((\cdot,\cdot),\cdot)$,  $\circ_2:=(\cdot,(\cdot,\cdot))$
before Theorem~\mref{thm:kos}.
Then $As^{(k)}$ is a Koszul self dual operad.  \mlabel{pp:self}
\end{prop}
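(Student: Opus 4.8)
The plan is to compute the Koszul dual $(As^{(k)})^{!}$ explicitly and observe it is isomorphic to $As^{(k)}$ itself, and then invoke the Koszulity established by the rewriting method. First I would recall the general recipe for the Koszul dual of a binary quadratic ns operad $\calp = \calp(E;R)$ with $E$ concentrated in arity $2$: one has $(As^{(k)})^{!} = \calp(E^{\vee}; R^{\perp})$, where $E^{\vee}$ is the linear dual of $E$ (again $k$-dimensional, with dual basis $\mu_1^{*},\dots,\mu_k^{*}$), and $R^{\perp}\subseteq E^{\vee}\circ_{(1)} E^{\vee}$ is the annihilator of $R$ under the canonical pairing on the arity-$3$ space $\calp(E)(3)$. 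For a single generating operation the arity-$3$ free piece is $2k^2$-dimensional, spanned by $\mu_i\circ_1\mu_j$ and $\mu_i\circ_2\mu_j$ for $1\le i,j\le k$, and the pairing used in Koszul duality introduces the sign: $\langle \mu_i^{*}\circ_1\mu_j^{*},\ \mu_{i'}\circ_1\mu_{j'}\rangle = \delta_{ii'}\delta_{jj'}$, $\langle \mu_i^{*}\circ_2\mu_j^{*},\ \mu_{i'}\circ_2\mu_{j'}\rangle = -\delta_{ii'}\delta_{jj'}$, and the mixed pairings vanish (this is the standard convention, e.g.\ in \mcite{LV}, that makes $As^{!}=As$).

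Next I would compute $R^{\perp}$ concretely. The space $R$ is spanned by the $k^2$ vectors $r_{ij} := \mu_i\circ_1\mu_j - \mu_i\circ_2\mu_j$. An element $\sum_{i,j} (a_{ij}\,\mu_i^{*}\circ_1\mu_j^{*} + b_{ij}\,\mu_i^{*}\circ_2\mu_j^{*})$ pairs to zero with every $r_{ij}$ precisely when $a_{ij} + b_{ij} = 0$ for all $i,j$ (the sign in the $\circ_2$ pairing contributes $-b_{ij}$, and the minus sign in $r_{ij}$ flips it back to $+b_{ij}$; so orthogonality to $\mu_i\circ_1\mu_j-\mu_i\circ_2\mu_j$ reads $a_{ij}-(-b_{ij})\cdot(-1)$ — I would double-check this bookkeeping carefully, since it is the one place a sign error would derail the whole proof). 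The upshot is that $R^{\perp}$ is $k^2$-dimensional, spanned by $\mu_i^{*}\circ_1\mu_j^{*} + \mu_i^{*}\circ_2\mu_j^{*}$ — or, after rescaling, by $\mu_i^{*}\circ_1\mu_j^{*} - (-\mu_j^{*})\circ_2(-\mu_i^{*})$ type relations; the precise linear form will come out of the computation, but the key point is that it again has the shape ``$\mu_i\circ_1\mu_j$ minus $\mu_i\circ_2\mu_j$'' in a suitable basis.

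Then I would exhibit the isomorphism $As^{(k)}\xrightarrow{\ \sim\ }(As^{(k)})^{!}$ of ns operads. It suffices to give a linear isomorphism $E\to E^{\vee}$ carrying $R$ onto $R^{\perp}$; I expect $\mu_i\mapsto \mu_i^{*}$ (possibly up to an overall sign, e.g.\ $\mu_i\mapsto -\mu_i^{*}$, dictated by the sign computation above) to work, because the relations of both operads are ``$\mu_i\circ_1\mu_j = \mu_i\circ_2\mu_j$'' ranging over the same index set $1\le i,j\le k$, and these are stable under any rescaling of the generators. This gives $As^{(k)}$ self dual. Finally, Koszulity: for $k=2$ this is exactly Theorem~\mref{thm:kos}, and for general $k$ the identical rewriting argument applies verbatim — the ordered basis $\mu_1>\mu_2>\cdots>\mu_k$, the rewriting rule $\mu_i\circ_1\mu_j\mapsto\mu_i\circ_2\mu_j$, the same critical monomials $\mu_i\circ_1\mu_j\circ_1\mu_k$, and the same pentagon diagram show confluence, so Theorem~\mref{thm:lv} gives Koszulity; I would just remark that the proof of Theorem~\mref{thm:kos} goes through word for word with $i,j,k$ now ranging in $\{1,\dots,k\}$.

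The main obstacle is purely the sign bookkeeping in the Koszul-duality pairing: one must use the version of the pairing on $\calp(E)(3)$ that reproduces $As^{!}=As$ (and not, say, $As^{!}=$ some twisted variant), and then track the signs through $R^{\perp}$ so that the resulting relations genuinely match those of $As^{(k)}$ rather than a sign-twisted cousin. Once the convention is pinned down, everything else is formal. I would therefore organize the write-up as: (1) state the pairing convention; (2) compute $R^{\perp}$; (3) write down the generator isomorphism and check $R\mapsto R^{\perp}$; (4) note Koszulity follows from the same rewriting argument as Theorem~\mref{thm:kos}.
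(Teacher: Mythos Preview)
Your approach is essentially the same as the paper's: compute the annihilator $R^{\perp}$ in the dual generators and observe that it is spanned by relators of the same form as those of $As^{(k)}$, so that $\mu_i\mapsto \mu_i^{*}$ gives the self-duality. The paper's proof is in fact just this one sentence, citing \mcite{GK} and \cite[Chapter 7]{LV} for the pairing convention and asserting that $R^{\perp}$ has basis $\{\mu_i^{*}\circ_1\mu_j^{*}-\mu_i^{*}\circ_2\mu_j^{*}\}$.

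One small slip to fix: from your own computation $a_{ij}+b_{ij}=0$ you should conclude that $R^{\perp}$ is spanned by $\mu_i^{*}\circ_1\mu_j^{*}-\mu_i^{*}\circ_2\mu_j^{*}$ (minus, not plus), so no rescaling or sign-twisting of generators is needed; the map $\mu_i\mapsto\mu_i^{*}$ carries $R$ onto $R^{\perp}$ on the nose. Your discussion of Koszulity for general $k$ via the same rewriting argument as Theorem~\mref{thm:kos} is actually more than the paper supplies here --- the paper's proof of this proposition addresses only self-duality and tacitly relies on the $k=2$ case (or the obvious extension) for Koszulity --- so keeping that paragraph is a good idea.
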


\begin{proof}
In the Koszul dual operad ${As^{(k)}}^{!}$ of $As^{(k)}$, let $\mu_i^*, 1\leq i\leq k,$ be the dual basis of $\mu_i, 1\leq i\leq k$. From \mcite{GK} and \cite[Chapter 7]{LV}, we find that
the space of relations of ${As^{(k)}}^{!}$, being the annihilator of $R$, is precisely the subspace with a basis
$$\left
\{\mu^*_i\circ_1\mu^*_j-\mu^*_i\circ_2\mu^*_j\,\big|\, 1\leq i,j\leq k
\right\}.$$
This proves the lemma.
\end{proof}

By the above fact and Theorem~\mref{thm:kos}, we can compute the Koszul complex for matching dialgebras.

\begin{coro}
Let $A$ be a matching dialgebra, i.e. a $As^{(2)}$-algenra.
The Koszul complex of $A$ is given by
$$\calc^{As^{(2)}}_{\op}(A):=({As^{(2)}}^{\scriptstyle \textrm{!`}}
(A),d).$$ More precisely, $\calc^{As^{(2)}}_{n-1}(A)={As^{(2)}}^{\scriptstyle \textrm{!`}}_{n}\ot A^{\ot n}$ with $d$
satisfying
\begin{eqnarray*}
d(\lambda\ot(a_1,\cdots,a_n))
&:=&\sum_{i=1}^{n-1}(-1)^{i+1}d_{i}(\lambda\ot(a_1,\cdots,a_n))\\
&:=&\sum_{i=1}^{n-1}(-1)^{i+1}
\xi\ot(a_1,\cdots,a_{i-1},\mu(a_i,a_{i+1}),\cdots,a_n)
\end{eqnarray*}
for $$\Delta_{(1)}(\lambda)=\sum_{i=1}^{n-1}(\xi;id,\cdots,id,\mu,id,\cdots,id)$$
 where $\mu\in E={\bf k}\{{\mu_1,\mu_2}\}$, $\lambda\in {As^{(2)}}^{\scriptstyle \textrm{!`}}_n$,
 $\xi\in {As^{(2)}}^{\scriptstyle \textrm{!`}}_{n-1}$
and $a_j \in A, 1\leq j\leq n$ and $\Delta_{(1)}$ is the infinitesimal
decomposition map in cooperad ${As^{(2)}}^{\scriptstyle \textrm{!`}}$. \mlabel{cor:mda}
\end{coro}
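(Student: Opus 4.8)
The plan is to make the Koszul complex explicit by assembling the ingredients already prepared. By the discussion preceding the statement, since $As^{(2)}$ is Koszul (Theorem~\mref{thm:kos}) the operadic homology of an $As^{(2)}$-algebra $A$ is computed by the Koszul complex $({As^{(2)}}^{\scriptstyle \textrm{!`}}(A),d)$, where ${As^{(2)}}^{\scriptstyle \textrm{!`}}(A)=\bigoplus_{n\ge 1}{As^{(2)}}^{\scriptstyle \textrm{!`}}_n\ot A^{\ot n}$ with ${As^{(2)}}^{\scriptstyle \textrm{!`}}_n\ot A^{\ot n}$ placed in homological degree $n-1$, and $d$ is built, as in~\cite[Chapter~12]{LV}, from the infinitesimal decomposition map $\Delta_{(1)}$ of the Koszul dual cooperad ${As^{(2)}}^{\scriptstyle \textrm{!`}}$ together with the structure map $E(A)\to A$ of $A$ (evaluation of a binary generator on the corresponding pair of arguments). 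Thus the task splits into two: describe ${As^{(2)}}^{\scriptstyle \textrm{!`}}$ as a graded vector space, and make $\Delta_{(1)}$ explicit.

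First I would pin down ${As^{(2)}}^{\scriptstyle \textrm{!`}}$. By Proposition~\mref{pp:self}, ${As^{(2)}}^{!}\cong As^{(2)}$ with $\mu_i^{*}$ dual to $\mu_i$, and the Koszul dual cooperad ${As^{(2)}}^{\scriptstyle \textrm{!`}}$ is, up to the standard degree suspension, the arity-wise linear dual of ${As^{(2)}}^{!}$. Dualizing the PBW (right comb) basis $\{(\mu_{i_1},\cdots,\mu_{i_{n-1}})\mid i_j\in\{1,2\}\}$ of $As^{(2)}_n$ produced in the proof of Theorem~\mref{thm:fmds} then gives a basis of ${As^{(2)}}^{\scriptstyle \textrm{!`}}_n$ indexed by $\{1,2\}^{n-1}$; in particular $\dim_{\bfk}{As^{(2)}}^{\scriptstyle \textrm{!`}}_n=2^{n-1}$ and $\calc^{As^{(2)}}_{n-1}(A)={As^{(2)}}^{\scriptstyle \textrm{!`}}_n\ot A^{\ot n}$, as asserted.

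Next I would compute $\Delta_{(1)}$, which is the dual (up to operadic suspension signs) of the partial composition $As^{(2)}_{n-1}\circ_{(1)}E\to As^{(2)}_n$. Here the convergent rewriting system of Theorem~\mref{thm:kos} does the computation: grafting a binary generator $\mu\in E$ onto a leaf of a right comb and reducing by the rule $\mu_i\circ_1\mu_j\mapsto\mu_i\circ_2\mu_j$ again produces a single right comb, namely the one obtained by inserting $\mu$ among its vertices, so each of the $n-1$ leaves contributes exactly one term. Dualizing, $\Delta_{(1)}(\lambda)=\sum_{i=1}^{n-1}(\xi;\id,\cdots,\id,\mu,\id,\cdots,\id)$ with $\mu\in E$ in the $i$-th slot, $\xi\in{As^{(2)}}^{\scriptstyle \textrm{!`}}_{n-1}$, and the pair $(\xi,\mu)$ read off from $\lambda$ via that normalization. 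Feeding this into the Koszul differential of~\mcite{LV} replaces the generator $\mu$ in the $i$-th term by the operation $\mu$ evaluated on $(a_i,a_{i+1})$ and the operadic suspension sign by $(-1)^{i+1}$, which is precisely the formula for $d$ in the statement. As a consistency check, $d\circ d=0$ is automatic from cooperad coassociativity, and in the lowest weight it reduces to the defining relation $\mu_i\circ_1\mu_j=\mu_i\circ_2\mu_j$ of $As^{(2)}$.

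The step I expect to be the real obstacle is the bookkeeping inside the computation of $\Delta_{(1)}$: tracking, through the rewriting reductions, exactly which generator $\mu_{i_j}$ and which reduced cooperad element $\xi$ accompany each position $i$, and verifying that the operadic (de)suspension signs of~\mcite{LV} collapse to the stated $(-1)^{i+1}$. Once the signs and indices are nailed down, the remainder is a direct transcription of Theorems~\mref{thm:kos} and~\mref{thm:fmds} and Proposition~\mref{pp:self} into the language of Koszul duality.
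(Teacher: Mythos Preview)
Your proposal is correct, but it takes a different route from the paper. You derive the explicit formula for $d$ from the general Koszul machinery in~\mcite{LV}: you identify ${As^{(2)}}^{\scriptstyle\textrm{!`}}_n$ via self-duality and the PBW basis, compute $\Delta_{(1)}$ by dualizing the partial compositions (using the rewriting normalization of Theorem~\mref{thm:kos}), and then invoke cooperad coassociativity so that $d^2=0$ comes for free. The paper instead takes the formula for $d$ as given, recalls from~\mcite{LV} how it arises as the composite $\Delta_{(1)}$--$\kappa$--$\gamma_A$, and then spends its effort on a direct hands-on verification that $d^2=0$: it expands $d^2(a_1,\dots,a_n)=\sum_{i,j}c_{ij}$, and checks $c_{ji}=-c_{ij}$ case by case, the nontrivial case $|i-j|=1$ being exactly the MDA relation $\nu(\mu(a_i,a_{i+1}),a_{i+2})=\nu(a_i,\mu(a_{i+1},a_{i+2}))$.

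What each buys: your approach is more conceptual and explains \emph{why} the formula and signs are what they are, but it leans on Proposition~\mref{pp:self} (which in the paper is stated after this corollary) and on the reader being comfortable with operadic (de)suspension conventions---precisely the bookkeeping you flagged as the obstacle. The paper's approach is more elementary and entirely self-contained: no appeal to the structure of ${As^{(2)}}^{\scriptstyle\textrm{!`}}$ beyond the formula itself, and the only algebraic input is the MDA axiom, visibly used in the $j=i\pm 1$ cancellation.
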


\begin{proof} Koszul complexes of symmetric operads are considered in~\cite[Chapter 12.1.2]{LV}. Since the operad $As^{(2)}$ is an ns operad, we give a direct proof.

Recall from Section~\cite[Chapter 11.2.2]{LV} that the
differential map $d$ is the unique coderivation which extends the composite of
the morphism $\kappa$ defined in ~\cite[Chapter 7.4.1]{LV} with the product
$\gamma_A$ given in ~\cite[Chapter 5.2.3]{LV} of A. It is explicitly given by the following composite
$${As^{(2)}}^{\scriptstyle \textrm{!`}}(A)\xrightarrow{\Delta_{(1)}\circ \text{Id}_{A}}({As^{(2)}}^{\scriptstyle \textrm{!`}}\circ_{(1)}{As^{(2)}}^{\scriptstyle \textrm{!`}})\circ \text{A}\xrightarrow{(\text{Id}{\circ_{(1)}}\kappa)\circ\text{Id}_{A}}
({As^{(2)}}^{\scriptstyle \textrm{!`}}\circ_{(1)}As^{(2)})\circ \text{A}\rightarrowtail {As^{(2)}}^{\scriptstyle \textrm{!`}}\circ As^{(2)}\circ \text{A}
\xrightarrow{\text{Id}\circ \gamma_A}{As^{(2)}}^{\scriptstyle \textrm{!`}}(A).$$

Since the operad $As^{(2)}$ is binary, the infinitesimal decomposition map $\Delta_{(1)}$
splits the $n$-ary cooperation $\lambda$ of ${As^{(2)}}^{\scriptstyle \textrm{!`}}$ into an $(n-1)$-ary cooperation and a binary cooperation
of ${As^{(2)}}^{\scriptstyle \textrm{!`}}$. By the morphism
$\kappa:{As^{(2)}}^{\scriptstyle \textrm{!`}}\rightarrow As^{(2)}$, the latter
cooperation is viewed as a binary operation $\mu$
of the operad $As^{(2)}$. The sum is over all these
possibilities of splitting.

Thus it is enough to check $d^{2}=0$. Since $\lambda$
is fixed, we just replace $d(\lambda\ot(a_1,\cdots,a_n))$
 by $d(a_1,\cdots,a_n)$ to simplify the notation.
By the definition of $d$ given in Corollary ~\mref{cor:mda}, we have
\begin{eqnarray}
d^{2}(a_1,\cdots,a_{n})
&=&\sum_{i=1}^{n-1}(-1)^{i+1}
\left(\sum_{j=1}^{n-2}(-1)^{j+1}(a_1,\cdots,a_{j-1},\nu(a_j,a_{j+1}),\cdots,a_{i-1},\mu(a_i,a_{i+1}),\cdots,a_{n})\right)\notag \\
&=&\sum_{i=1}^{n-1}\sum_{j=1}^{n-1}c_{ij}
\mlabel{eq:diff}
\end{eqnarray}
with $\nu,\mu\in\{\opa,\opb\}$ and
\[c_{ij}=\left \{
\begin{array}{ll}
0 & i=j,\\
(-1)^{i+j+1}(a_1,\cdots,a_{i-1},\nu(\mu(a_i,a_{i+1}),a_{i+2}),\cdots,a_n) & j=i+1,\\
(-1)^{i+j+1}(a_1,\cdots,a_{i-1},\mu(a_i,a_{i+1}),\cdots,a_{j-1},\nu(a_j,a_{j+1}),\cdots,a_n) & j\geq i+2,\\
(-1)^{i+j}(a_1,\cdots,a_{i-2},\nu(a_{i-1},\mu(a_i,a_{i+1})),\cdots,a_n) & i=j+1,\\
(-1)^{i+j}(a_1,\cdots,a_{j-1},\nu(a_j,a_{j+1}),\cdots,a_{i-1},\mu(a_i,a_{i+1}),\cdots,a_n) & i\geq j+2.
\end{array}
\right.
\]

If $j=i+1$, we have
$$c_{ij}=(-1)^{i+j+1}(a_1,\cdots,a_{i-1},\nu(\mu(a_i,a_{i+1}),a_{i+2}),\cdots,a_n)$$ and
\begin{eqnarray*}
c_{ji}&=& (-1)^{i+j}(a_1,\cdots,a_{j-2},\nu(a_{j-1},\mu(a_j,a_{j+1})),\cdots,a_n)\\
&=& (-1)^{i+j}(a_1,\cdots,a_{i-1},\nu(a_i,\mu(a_{i+1},a_{i+2})),\cdots,a_n)\\
&=&- (-1)^{i+j+1}(a_1,\cdots,a_{i-1},\nu(\mu(a_i,a_{i+1}),a_{i+2})\\
&=&-c_{ij}.
\end{eqnarray*}
If $j\geq i+2$, we also have
\begin{eqnarray*}
c_{ji}=(-1)^{i+j}(a_1,\cdots,a_{i-1},\nu(a_i,a_{i+1}),\cdots,a_{j-1},\mu(a_j,a_{j+1}),\cdots,a_n)=-c_{ij}.
\end{eqnarray*}
Thus we have $c_{ji}=-c_{ij}$ for all $1\leq i,j\leq n-1$ and hence $d^2=0.$
\end{proof}

\begin{coro}
Let $\bf k$ be a field of characteristic zero. The operadic homology of the free $As^{(2)}$-algebra $\bar{T}_{\rop_2}(\bar{T} _{\rop_1}(V))$ given in Theorem~\mref{thm:fmds2} is equal to
 \[
 {\mathrm{H}_{n}}^{As^{(2)}}(\overline{T}_{\rop_2}(\overline{T} _{\rop_1}(V))):=\left\{
   \begin{array}{ll}
V, & \text{if } \,n=0,\\
0, & \text{if } \,n\geq 1.
\end{array}
   \right.
\]
 \end{coro}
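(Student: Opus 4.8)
The plan is to derive the vanishing from the general principle that, over a Koszul operad, the operadic (Quillen) homology of a \emph{free} algebra is concentrated in homological degree zero. We have already established in Theorem~\mref{thm:kos} that $As^{(2)}$ is a Koszul operad, so by Corollary~\mref{cor:mda} the operadic homology of any matching dialgebra $A$ is the homology of the Koszul complex $\calc^{As^{(2)}}_\bullet(A)$, with $\calc^{As^{(2)}}_{n-1}(A)={As^{(2)}}^{\scriptstyle \textrm{!`}}_n\ot A^{\ot n}$; and by Theorem~\mref{thm:fmds2} the algebra $\overline{T}_{\rop_2}(\overline{T}_{\rop_1}(V))$ is the free $As^{(2)}$-algebra on $V$, which we denote $As^{(2)}(V)=\bigoplus_{m\geq 1}As^{(2)}_m\ot V^{\ot m}$ as in Theorem~\mref{thm:fmds}. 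Thus it suffices to compute the homology of $\calc^{As^{(2)}}_\bullet(As^{(2)}(V))$.

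First I would identify this complex with the image, under the associated functor $M\mapsto M(V)=\bigoplus_n M_n\ot V^{\ot n}$, of the Koszul complex of the operad itself. Since $As^{(2)}$ is a ns operad, the underlying graded space ${As^{(2)}}^{\scriptstyle \textrm{!`}}_n\ot(As^{(2)}(V))^{\ot n}$ reassembles, by the very definition of the composite of ns collections, into $\big({As^{(2)}}^{\scriptstyle \textrm{!`}}\circ As^{(2)}\big)(V)$; and the differential $d$ written down in Corollary~\mref{cor:mda}, being built from the infinitesimal decomposition $\Delta_{(1)}$ of the cooperad, the Koszul morphism $\kappa\colon {As^{(2)}}^{\scriptstyle \textrm{!`}}\to As^{(2)}$, and the operadic composition $\gamma$, is precisely the Koszul differential $d_\kappa$ of \cite[Chapter~11.2]{LV} on ${As^{(2)}}^{\scriptstyle \textrm{!`}}\circ_\kappa As^{(2)}$ transported through this functor. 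Hence $\calc^{As^{(2)}}_\bullet(As^{(2)}(V))=\big({As^{(2)}}^{\scriptstyle \textrm{!`}}\circ_\kappa As^{(2)}\big)(V)$ as chain complexes.

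Now Koszulity enters directly: by definition (see \cite[Chapter~6]{LV}), saying that $As^{(2)}$ is a Koszul operad is exactly saying that the Koszul complex ${As^{(2)}}^{\scriptstyle \textrm{!`}}\circ_\kappa As^{(2)}$ is a resolution of the unit operad $I$, i.e. $\mathrm{H}_0=I$ and $\mathrm{H}_n=0$ for $n\geq 1$. Since we work over a field and $As^{(2)}$ is a ns operad, the functor $M\mapsto M(V)$ is a direct sum of $\bfk$-linear tensor functors, hence exact, so it commutes with homology (in particular the characteristic-zero hypothesis is not essential in the ns setting and is only kept to stay within the conventions of \cite{LV}). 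Applying $(-)(V)$ to the quasi-isomorphism ${As^{(2)}}^{\scriptstyle \textrm{!`}}\circ_\kappa As^{(2)}\xrightarrow{\sim}I$ therefore gives $\mathrm{H}_\bullet\big(\calc^{As^{(2)}}_\bullet(As^{(2)}(V))\big)=\mathrm{H}_\bullet\big({As^{(2)}}^{\scriptstyle \textrm{!`}}\circ_\kappa As^{(2)}\big)(V)=I(V)=V$, concentrated in homological degree $0$, which is the asserted formula.

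The one delicate point, and the step I expect to require the most care, is the bookkeeping in the identification of the two complexes: one must match the explicit signs and the arity-shift in the formula for $d$ in Corollary~\mref{cor:mda} with those of the Koszul differential $d_\kappa$ of \cite{LV}, and check that the re-bracketing of $\bigoplus_n{As^{(2)}}^{\scriptstyle \textrm{!`}}_n\ot(As^{(2)}(V))^{\ot n}$ into the functor $(-)(V)$ is compatible with the homological grading. If one prefers to avoid the general machinery, an alternative is to stay inside $\calc^{As^{(2)}}_\bullet(\overline{T}_{\rop_2}(\overline{T}_{\rop_1}(V)))$ and construct an explicit contracting homotopy in positive degrees, using the splitting $As^{(2)}(V)=V\oplus(\text{decomposable elements})$ together with the $2^{n-1}$-dimensional bases of $As^{(2)}_n$ and ${As^{(2)}}^{\scriptstyle \textrm{!`}}_n$ supplied by Theorem~\mref{thm:fmds} and Proposition~\mref{pp:self}; this route is more computational but uses only material already developed above.
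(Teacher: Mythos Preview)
Your proposal is correct and takes essentially the same approach as the paper: the paper's proof consists of invoking \cite[Theorem~12.1.6]{LV} together with Theorem~\mref{thm:kos} (Koszulity) and Theorem~\mref{thm:fmds2} (freeness), and your argument is precisely an unpacking of the content of that cited theorem---identifying $\calc^{As^{(2)}}_\bullet(As^{(2)}(V))$ with $({As^{(2)}}^{\scriptstyle\textrm{!`}}\circ_\kappa As^{(2)})(V)$ and using that Koszulity makes this a resolution of $I(V)=V$. Your version is more explicit but mathematically identical.
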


\begin{proof}
By Theorem~\mref{thm:kos}, $As^{(2)}$ is Koszul. Then the corollary is a direct consequence of \cite[Theorem~12.1.6]{LV} since $\overline{T}_{\rop_2}(\overline{T} _{\rop_1}(V))$
is the free $As^{(2)}$-algebra on $V$ by Theorem~\mref{thm:fmds2}.
\end{proof}

\smallskip

\noindent
{\bf Acknowledgements. }
C. Bai would like to thank the support by NSFC (10920161) and SRFDP
(200800550015). L. Guo thanks NSF grant DMS-1001855 for support. The authors thank J.-L. Loday, to whose memory this paper is dedicated to, and the editor for very helpful suggestions and comments.

\end{document}